\theoremstyle{plain}
\newtheorem{theorem}{Theorem}
\newtheorem{proposition}[theorem]{Proposition}
\newtheorem{lemma}[theorem]{Lemma}
\theoremstyle{definition}
\newtheorem{definition}[theorem]{Definition}
\newtheorem{example}[theorem]{Example}
\newtheorem{remark}[theorem]{Remark}
\newtheorem{algorithm}{Algorithm}
\newcommand{\OO}{{\mathcal O}}
\newcommand{\FF}{{\mathbb F}}
\DeclareMathOperator{\res}{res}
\DeclareMathOperator{\divv}{div}
\DeclareMathOperator{\im}{Im}
\title{Compression for trace zero points on twisted Edwards curves}
\author{Giulia Bianco and Elisa Gorla\thanks{The research reported in this paper was partially supported by the Swiss National Science Foundation under grant no. 200021\_150207.}}
\affil{Institut de Math\'{e}matiques, Universit\'{e} de Neuch\^{a}tel\\Rue Emile-Argand 11, CH-2000 Neuch\^{a}tel, Switzerland}
\date{}
\begin{document}
\maketitle

\begin{abstract}
We propose two optimal representations for the elements of trace zero subgroups of twisted Edwards curves. 
For both representations, we provide efficient compression and decompression algorithms. The efficiency of the algorithm 
is compared with the efficiency of similar algorithms on elliptic curves in Weierstrass form.
\end{abstract}

\section*{Introduction}\label{intr0}

Trace zero subgroups are subgroups of the groups of points of an elliptic curve over extension fields. They were first proposed for use in public key cryptography by Frey in \cite{Frey}. A main advantage of trace zero subgroups is that they offer a better scalar multiplication performance than the whole group of points of an elliptic curve of approximately the same cardinality. This allows a fast arithmetic, which can speed up the calculations by 30\% compared with elliptic curves groups (see e.g. \cite{langetzero} for the case of hyperelliptic curves, \cite{ac07} and \cite{cesenathesis} for elliptic curves over fields of even characteristic). In addition, computing the cardinality of a trace zero subgroup is more efficient than for the group of points of an elliptic curve of approximately the same cardinality. Moreover, the DLP in a trace zero subgroup has the same complexity as the DLP in the group of $\FF_{q^n}$-rational points of the curve, of which the trace zero subgroup is a proper subgroup. Hence, when we restrict to this subgroup, we gain a more efficient arithmetic without compromising the security. Finally, in the context of pairings trace zero subgroups of supersingular elliptic curves offer higher security than supersingular elliptic curves of the same bit-size, as shown in~\cite{rs09}.

The problem of how to compress the elements of the trace zero subgroup is the analogue within elliptic (and hyperelliptic) curve cryptography of torus-based cryptography in finite fields. For elliptic and hyperelliptic curves this problem has been studied by many authors, see \cite{nau99}, \cite{langetzero}, \cite{sil05}, \cite{rs09}, \cite{EM1}, and \cite{EM2}.

Edwards curves were first introduced by H.M. Edwards in \cite{EdEd} as a normal form for elliptic curves. They were proposed for use in elliptic curve cryptography by Bernstein and Lange in \cite{BL1}. Twisted Edwards curves were introduced shortly after in \cite{BLal}. They are relevant from a cryptographic point of view since the group operation can be computed very efficiently and via strongly unified formulas, i.e. formulas that do not distinguish between addition and doubling. This makes them more resistant to side-channel attacks. We refer to \cite{BL1}, \cite{BL2}, and \cite{BLal} for a detailed discussion of the advantages of Edwards curves. %Moreover, for certain parameters, addition formulas on twisted Edwards curves are also complete, as proved in \cite{BLal}.

In this paper, we provide two efficient representations for the elements of the trace zero subgroups of twisted Edwards curves. The first one follows ideas from \cite{EM1} and it is based on Weil restriction of scalars and Semaev's summation polynomials. The second one follows ideas from \cite{EM2} and it makes use of rational functions on the curve. Some obstacles have to be overcome in adapting these ideas to Edwards curves, especially for adapting the method from~\cite{EM2}.  

Given a twisted Edwards curve defined over a finite field $\FF_q$ of odd characteristic and a field extension of odd prime degree $\FF_q \subset \FF_{q^n}$, we consider the trace zero subgroup $\mathcal{T}_n$ of the group of $\FF_{q^n}$-rational points of the curve. We give two efficiently computable maps from $\mathcal{T}_n$ to $\FF_{q}^{n-1}$, such that inverse images can also be efficiently computed. One of our maps identifies Frobenius conjugates, while the other identifies Frobenius conjugates and negatives of points. Since $\mathcal{T}_n$ has order $\OO(q^{n-1})$, our maps are optimal representations of $\mathcal{T}_n$ modulo Frobenius equivalence. For both representations we provide efficient algorithms to calculate the image and the preimage of an element, that is, to compress and decompress points. We also compare with the corresponding algorithms for trace zero subgroups of elliptic curves in short Weierstrass form.

The article is organized as follows: In Section 1 we give some preliminaries on twisted Edwards curves, finite fields, trace zero subgroups, and representations. In Section 2 we present our first optimal representation based on Weil restriction and summations polynomials, and give compression and decompression algorithms. We then make explicit computations for the cases $n=3$ and $n=5$, and compare execution times of our Magma implementation with those of the corresponding algorithms for elliptic curves in short Weierstrass form. In Section 3 we propose another representation based on rational functions, with the corresponding algorithms, computations, and efficiency comparison.

\section{Preliminaries and notations}

Let $\mathbb{F}_q$ be a finite field of odd characteristic and let $\mathbb{F}_q \subset \mathbb{F}_{q^n}$ be a field extension of odd prime order. Choose a normal basis $\{\alpha,\alpha^q,\dots,\alpha^{q^{n-1}}\}$ of $\mathbb{F}_{q^n}$ 
over $\mathbb{F}_q$. If $n|q-1$, let $\mathbb{F}_{q^n} = \mathbb{F}_q[\xi]/(\xi^n-\mu)$, where $\mu$ is not a $n^{th}$-power in $\mathbb{F}_q$, and choose the basis $\{1,\xi,\dots,\xi^{n-1}\}$ of $\mathbb{F}_{q^n}$ over $\mathbb{F}_q$. This choice is particularly suitable for computation, since it produces sparse equations. When writing explicit formulas, we always assume that we are in the latter situation.

When counting the number of operations in our computations, we denote respectively by M, S, and I multiplications, squarings, and inversions in the field. We do not take into account additions and multiplications by constants. The timings for the implementation of our algorithms in Magma refer to version V2.20-7 of the software, running on a single 3 GHz core.

\subsection{Twisted Edwards curves} 

\begin{definition}
A {\bf twisted Edwards curve} over $\FF_q$ is a plane curve of equation
$$E_{a,d} : ax^2+y^2 = 1 + dx^2y^2,$$
where $a,d \in\FF_q\setminus \{0\}$ and $a\not=d$.
An {\bf Edwards curve} is a twisted Edwards curve with $a=1$.
\end{definition}

Twisted Edwards curves are curves of geometric genus one with two ordinary multiple points, namely the two points at infinity.
Since $E_{a,d}$ is birationally equivalent to a smooth elliptic curve, one can define a group law on the set of points of $E_{a,d}$, called the twisted Edwards addition law.

\begin{definition}
The sum of two points $P_1=(x_1,y_1)$ and $P_2=(x_2,y_2)$ of $E_{a,d}$ is defined as
$$P_1+P_2=(x_1,y_1)+(x_2,y_2)=\left(\frac{x_1y_2+x_2y_1}{1+dx_1x_2y_1y_2},\frac{y_1y_2-ax_1x_2}{1-dx_1x_2y_1y_2}\right).$$
\end{definition}

We refer to \cite[Section 3]{BL1} and \cite[Section 6]{BLal} for a detailed discussion on the formulas and a proof of correctness. 
The point $\OO=(0,1)\in E_{a,d}$ is the neutral element of the addition, and we denote by $-P$ the additive inverse of $P$. If $P=(x,y)$, then $-P=(-x,y)$. We let $\OO'=(0,-1)\in E_{a,d}$, and denote by $\Omega_1=[1,0,0]$ and $\Omega_2=[0,1,0]$ the two points at infinity of $E_{a,d}$.

Edwards curves were introduced in \cite{EdEd} as a convenient normal form for elliptic curves. Over an algebraically closed field, every elliptic curve in Weierstrass form is birationally equivalent to an Edwards curve, and vice versa. This is however not the case over $\FF_q$, where Edwards curves represent only a fraction of elliptic curves in Weierstrass form.
In \cite[Theorem 3.2]{BLal} it is shown that a twisted Edwards curve defined over $\FF_q$ is birationally equivalent over $\FF_q$ to an elliptic curve in Montgomery form, and conversely, an elliptic curve in Montgomery form defined over $\FF_q$ is birationally equivalent over $\FF_q$ to a twisted Edwards curve. Moreover, the twisted Edwards addition law corresponds to the usual addition law on an elliptic curve in Weierstrass form via the birational isomorphism, as shown in~\cite[Theorem 3.2]{BL1}. Similarly to elliptic curves in Weierstrass form, the twisted Edwards addition law has a geometric interpretation. 

\begin{proposition}{\em (\!\!\cite[Section 4]{GeomInt})}\label{conic}
Let $P_1,P_2\in E_{a,d}$, and let $C$ be the projective conic passing through $P_1$, $P_2$, $\Omega_1$, $\Omega_2$, and $\mathcal{O}'$. Then the point $P_1+P_2$ is the symmetric with respect to the $y$-axis of the eighth point of intersection between $E_{a,d}$ and $C$.
\end{proposition}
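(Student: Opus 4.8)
The plan is to pass to a smooth model of $E_{a,d}$ and translate the statement into a linear equivalence of divisors, which the group law (via Abel's theorem) then converts into the desired identity; I would argue over $\overline{\FF}_q$, since the addition formula is a polynomial identity and it suffices to check it on a dense open set. First I would take the normalization $\widetilde{E}$ of the quartic $E_{a,d}$, a smooth genus-one curve carrying the group law transported from $E_{a,d}$. The two points at infinity are ordinary double points: the tangent cones at $\Omega_1=[1,0,0]$ and $\Omega_2=[0,1,0]$ are $az^2-dy^2$ and $z^2-dx^2$, which split into distinct lines since $a,d\neq 0$, so each $\Omega_i$ splits into two points $\Omega_i^{+},\Omega_i^{-}$ on $\widetilde E$. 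A general line pulls back to a divisor of degree $4$ and a conic to one of degree $8$, and since the ratio of any two quadratic forms is a rational function on $\mathbb{P}^2$ (hence on $\widetilde E$), the pullbacks of any two conic sections are linearly equivalent.

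Next I would read off the divisor cut by $C$. By B\'ezout the conic meets the quartic in $8$ points counted with multiplicity. Passing through each node forces local intersection multiplicity at least $2$ there, and for generic $P_1,P_2$ the conic is transversal to both branches, so $C$ contributes $\Omega_1^{+}+\Omega_1^{-}+\Omega_2^{+}+\Omega_2^{-}$ together with $P_1$, $P_2$, $\OO'$ and exactly one further point $Q$. Thus on $\widetilde E$ the pullback of $C$ is
\[
(P_1)+(P_2)+(\OO')+(\Omega_1^{+})+(\Omega_1^{-})+(\Omega_2^{+})+(\Omega_2^{-})+(Q).
\]

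As a reference conic I would take the degenerate conic $xy=0$. The line $x=0$ meets $E_{a,d}$ transversally in $\OO=(0,1)$, $\OO'=(0,-1)$ and the node $\Omega_2$, while $y=0$ meets it in the opposite points $T=(1/\sqrt{a},0)$, $-T=(-1/\sqrt{a},0)$ and the node $\Omega_1$; hence its pullback is $(\OO)+(\OO')+(T)+(-T)+(\Omega_1^{+})+(\Omega_1^{-})+(\Omega_2^{+})+(\Omega_2^{-})$. Equating the two conic divisors up to linear equivalence and cancelling the common terms $(\OO')$ and the four branch points yields
\[
(P_1)+(P_2)+(Q)\sim(\OO)+(T)+(-T).
\]
Applying Abel's theorem, the group-law sum of the points of a degree-preserving linear equivalence is invariant, so $P_1+P_2+Q=\OO+T+(-T)=\OO$, because $-T$ is the negative of $T$. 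Therefore $P_1+P_2=-Q$, and since negation on $E_{a,d}$ is $(x,y)\mapsto(-x,y)$, i.e.\ reflection in the $y$-axis, the point $P_1+P_2$ is the reflection of the eighth point $Q$, as claimed.

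I expect the main obstacle to be the careful bookkeeping at infinity: verifying that $\Omega_1,\Omega_2$ are nodes, determining the branch behaviour on $\widetilde E$, and computing the exact intersection multiplicities of the lines and of $C$ with the quartic there, so that the pullback divisors are precisely as stated. Degenerate configurations (for instance $P_1=P_2$, the conic tangent to a branch, or $Q$ landing at infinity or at a special point) would then be handled separately, either by a limiting argument or by invoking that the addition formula is a polynomial identity, so that validity on a dense open set forces it everywhere.
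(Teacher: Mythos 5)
The paper itself offers no proof of this proposition --- it is quoted from \cite[Section 4]{GeomInt} --- and the argument given there is essentially the divisor computation you carry out, so your route matches the source. Your bookkeeping checks out: the tangent cones $az^2-dy^2$ at $\Omega_1$ and $z^2-dx^2$ at $\Omega_2$ split into distinct lines since $ad\neq 0$, a conic through both nodes generically pulls back to $\Omega_1^{+}+\Omega_1^{-}+\Omega_2^{+}+\Omega_2^{-}$ on the normalization, the degenerate conic $xy=0$ cuts $(\OO)+(\OO')+(T)+(-T)$ plus the same four branch points, and cancelling yields $(P_1)+(P_2)+(Q)\sim(\OO)+(T)+(-T)$. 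The one step you should make explicit is the appeal to Abel's theorem: it computes sums with respect to the group law induced by $\mathrm{Pic}^0$ with origin $\OO$, not with respect to an arbitrary ``transported'' law, so to conclude $P_1+P_2+Q=\OO$ \emph{for the Edwards addition formulas} you must invoke the fact that the Edwards law agrees with the $\mathrm{Pic}^0$ law --- this is \cite[Theorem 3.2]{BL1} (via the birational equivalence with a Weierstrass/Montgomery model), a fact the paper records immediately before the proposition. Without that input your argument identifies the eighth point with the reflection of the divisor-class sum rather than of the Edwards sum. With that reference added, and with the degenerate configurations (coincident points, tangency to a branch at a node, $Q$ at infinity) handled by the density argument you sketch, the proof is complete and correct.
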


\subsection{Trace zero subgroups}

Let $E_{a,d}$ be a twisted Edwards curve defined over $\mathbb{F}_q$.
We denote by $E_{a,d}(\mathbb{F}_{q^n})$ the group of $\mathbb{F}_{q^n}$-rational points of $E_{a,d}$, by $P_{\infty}$ any point at infinity of $E_{a,d}$, and by $\varphi$ the Frobenius endomorphism on $E_{a,d}$:
$$\varphi : E_{a,d} \longrightarrow E_{a,d} \mbox{ , } (x,y) \mapsto (x^q,y^q) \mbox{ , } P_{\infty}\mapsto P_{\infty}.$$

\begin{definition}
The \textbf{trace zero subgroup} $\mathcal{T}_n$ of $E_{a,d}(\FF_{q^n})$ is the kernel of the trace map
$${\rm Tr}: E_{a,d}(\FF_{q^n}) \longrightarrow E_{a,d}(\FF_q) \mbox{ , } P \mapsto P + \varphi(P)+\varphi^2(P)+\dots + \varphi^{n-1}(P).$$
\end{definition}

We can view $\mathcal{T}_n$ as the $\FF_{q}$-rational points of an abelian variety of dimension $n-1$ defined over $\FF_q$, called the trace zero variety. We refer to \cite{handbook} for a construction and the basic properties of the trace zero variety.
The following result is an easy consequence of \cite{handbook}, Proposition 7.13.

\begin{proposition}\label{exseq}
The sequence
$$0 \longrightarrow E_{a,d}(\mathbb{F}_q) \longrightarrow E_{a,d}(\FF_{q^n}) \stackrel{\varphi -{\rm id}}{\longrightarrow} \mathcal{T}_n \longrightarrow 0$$
is exact. Therefore the DLPs in $E_{a,d}(\FF_{q^n})$ and in $\mathcal{T}_n$ have the same complexity.
\end{proposition}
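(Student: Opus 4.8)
The plan is to treat $G:=E_{a,d}(\FF_{q^n})$ as a finite abelian group on which the Frobenius $\varphi$ acts as a group endomorphism, and to exploit the factorization coming from $T^n-1=(T-1)(1+T+\cdots+T^{n-1})$ in $\Z[T]$. Writing the trace as the operator ${\rm Tr}={\rm id}+\varphi+\cdots+\varphi^{n-1}$, this factorization reads $\varphi^n-{\rm id}=(\varphi-{\rm id})\circ{\rm Tr}={\rm Tr}\circ(\varphi-{\rm id})$. Since every point of $G$ is fixed by $\varphi^n$, we have $\varphi^n={\rm id}$ on $G$, so both composites vanish on $G$. In particular ${\rm im}(\varphi-{\rm id})\subseteq\ker({\rm Tr})=\mathcal{T}_n$, which shows that $\varphi-{\rm id}$ really does map $G$ into $\mathcal{T}_n$ and that the displayed sequence is a complex; dually, $(\varphi-{\rm id})\circ{\rm Tr}=0$ shows ${\rm im}({\rm Tr})\subseteq\ker(\varphi-{\rm id})$.

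First I would dispatch the two easy exactness points. The leftmost map is the inclusion $E_{a,d}(\FF_q)\hookrightarrow G$, so exactness at $E_{a,d}(\FF_q)$ is simply its injectivity. For exactness at $G$, I note that $\ker(\varphi-{\rm id})$ consists exactly of the points fixed by $\varphi$, i.e.\ those with coordinates in $\FF_q$; hence $\ker(\varphi-{\rm id})=E_{a,d}(\FF_q)$, which is precisely the image of the inclusion. Both of these are formal.

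The real content is exactness at $\mathcal{T}_n$, i.e.\ surjectivity of $\varphi-{\rm id}$ onto $\mathcal{T}_n$, and I expect this to be the only nontrivial step. By the first isomorphism theorem, $\#\,{\rm im}(\varphi-{\rm id})=\#G/\#\ker(\varphi-{\rm id})=\#E_{a,d}(\FF_{q^n})/\#E_{a,d}(\FF_q)$. On the other side, ${\rm Tr}$ sends $G$ into $E_{a,d}(\FF_q)$, since $\varphi\,{\rm Tr}(P)={\rm Tr}(P)$ using $\varphi^n(P)=P$, so $\#\mathcal{T}_n=\#G/\#\,{\rm im}({\rm Tr})$. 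Hence it suffices to prove that ${\rm Tr}\colon E_{a,d}(\FF_{q^n})\to E_{a,d}(\FF_q)$ is surjective: this gives $\#\mathcal{T}_n=\#E_{a,d}(\FF_{q^n})/\#E_{a,d}(\FF_q)=\#\,{\rm im}(\varphi-{\rm id})$, which together with ${\rm im}(\varphi-{\rm id})\subseteq\mathcal{T}_n$ forces equality. This surjectivity of the trace is exactly the input I would draw from \cite[Proposition 7.13]{handbook}, and it is the step I would treat as the main obstacle. Conceptually, the two potential defects $E_{a,d}(\FF_q)/{\rm im}({\rm Tr})$ and $\mathcal{T}_n/{\rm im}(\varphi-{\rm id})$ are the Tate cohomology groups $\hat H^{0}$ and $\hat H^{-1}$ of the cyclic group $\langle\varphi\rangle$ acting on $G$; since the Herbrand quotient of a finite module is $1$ they have equal order, so the two exactness questions are equivalent, and both are automatic whenever $\gcd(n,\#E_{a,d}(\FF_{q^n}))=1$.

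Finally, the statement about the DLP follows from the exact sequence. The surjection $\varphi-{\rm id}\colon E_{a,d}(\FF_{q^n})\to\mathcal{T}_n$ is efficiently computable and has kernel $E_{a,d}(\FF_q)$ of size $\OO(q)$, negligible compared with $\#\mathcal{T}_n=\OO(q^{n-1})$. A discrete logarithm in $\mathcal{T}_n$ therefore lifts to one in $E_{a,d}(\FF_{q^n})$ and is pushed back down through $\varphi-{\rm id}$ with only polynomial overhead, and the large prime-order factors of the two groups coincide; by the usual Pohlig--Hellman reduction the small kernel does not affect the asymptotics, so the two DLPs are polynomial-time equivalent and have the same complexity.
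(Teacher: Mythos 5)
The paper gives no argument for this proposition beyond the one-line remark that it is ``an easy consequence of \cite[Proposition~7.13]{handbook}'', so your write-up is a correct expansion of that same approach: the formal points (the complex structure via $\varphi^n-\mathrm{id}=(\varphi-\mathrm{id})\circ\mathrm{Tr}$, and $\ker(\varphi-\mathrm{id})=E_{a,d}(\FF_q)$) are handled correctly, and the only substantive step, exactness at $\mathcal{T}_n$, is correctly reduced by your counting argument to the surjectivity of the trace map, which is exactly the input supplied by the citation. If you want a self-contained alternative to that input, note that $\varphi-\mathrm{id}$ is a surjective isogeny on $\overline{\FF}_q$-points, and any preimage $P$ of $Q\in\mathcal{T}_n$ satisfies $\varphi^n(P)-P=\mathrm{Tr}(Q)=\OO$, hence already lies in $E_{a,d}(\FF_{q^n})$; this proves surjectivity of $\varphi-\mathrm{id}$ directly and makes trace surjectivity a corollary rather than a hypothesis.
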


\subsection{Representations}

\begin{definition}
Let $G$ be a finite set and $\ell \in \mathbb{Z}_+$. A \textbf{representation} of $G$ of size $\ell$ is a map
$$\mathcal{R}: G \longrightarrow \FF_{2}^{\ell},$$
with the property that an element of $\mathbb{F}_2^{\ell}$ has at most $d$ inverse images, where $d=\OO(1)$.
A representation is \textbf{optimal} if $$\ell = \lceil\log_2{|G|}\rceil+ \OO(1).$$
Given $\gamma \in G$ and $x \in {\rm Im}\mathcal{R}$, we call {\bf compression} and {\bf decompression} the process of computing $\mathcal{R}(\gamma)$ and $\mathcal{R}^{-1}(x)$, respectively.
\end{definition}

\begin{remark}
Define an equivalence relation in $G$ via $g \sim h$ iff $\mathcal{R}(g)=\mathcal{R}(h)$.
Any representation $\mathcal{R}$ of $G$ of size $\ell$ induces an injective representation of $\overline{G}=G/ \sim$ of size $\ell$:
$$\overline{\mathcal{R}}: \overline{G} \longrightarrow \mathbb{F}_2^{\ell}.$$
Since $\log_2 |G|=\log_2{|\overline{G}|}+\OO(1)$, $\mathcal{R}$ is an optimal representation of $G$ if and only if $\overline{\mathcal{R}}$ is an optimal representation of $\overline{G}$. Hence the definition of optimal representation is independent of the constant $d$. 
\end{remark}

\begin{remark}\label{reprFq}
It is well known that $\mathbb{F}_q$ has an optimal representation of size $\lceil\log_{2}q\rceil$. Therefore, if $|G|=\Theta(q^m)$, an optimal representation of $G$ may be given via
\begin{equation}\label{optrepr}\mathcal{R}: G \longrightarrow \mathbb{F}_{q}^m \times \mathbb{F}_2^k,\end{equation}
where $k=\OO(1)$.
\end{remark}

In this paper we give two representations of $\mathcal{T}_n$ with $m=n-1$ and $d=n$ or $d=2n$. They are optimal, since 
$|\mathcal{T}_n|=\Theta(q^{n-1})$ by Proposition~\ref{exseq}. 

\section{An optimal representation using summation polynomials}

Let $\FF_q$ be a finite field of odd characteristic and let $E_{a,d}$ be the twisted Edwards curve of equation 
$$ax^2+y^2=1+dx^2y^2$$ where $a,d\in\FF_q\setminus \{0\}$ and $a\neq d$. 
%Let $E_{a,d}$ be a twisted Edwards curve defined over $\FF_q$. 
%In this section we give an optimal representation for the trace zero subgroup $\mathcal{T}_n\subset E_{a,d}(\FF_{q^n})$ using Semaev's summation polynomials. 
Following ideas from \cite{EM1}, in this section we use Weil restriction of scalars
and Semaev's summation polynomials to write an equation for the subgroup $\mathcal{T}_n$. 
Similarly to the case of elliptic curves in Weierstrass form, a point $P=(x,y)\in E_{a,d}(\FF_{q^n})$ can be represented via $y\in\FF_{q^n}$. Using the curve equation, the value of $x$ can be recovered up to sign.  Hence, after choosing an $\FF_q$-basis of $\FF_{q^n}$, each pair of points $\pm P\in E_{a,d}(\FF_{q^n})$ can be represented by the element $(y_0,\ldots,y_{n-1})\in\FF_q^n$ corresponding to $y\in\FF_{q^n}$ under the isomorphism $\FF_{q^n}\cong\FF_q^n$ induced by the chosen basis. Having an equation for $\mathcal{T}_n$ allows us to drop one of the $y_i$'s and represent each pair $\pm P$ via $n-1$ coordinates in $\FF_q$, thus providing an optimal representation for the elements of $\mathcal{T}_n$. In order to make computation of the compression and decompression maps more efficient, we modify this basic idea and use the elementary symmetric functions of $y,y^q,\ldots,y^{q^{n-1}}$ instead of the vector $(y_0,\ldots,y_{n-1})\in\FF_q^n$.

Summation polynomials were introduced by Semaev in~\cite{SemPol} for elliptic curves in Weierstrass form. Here
we use them in the form for Edwards curves from~\cite{SemPolEd}.

\begin{definition}
The n-th {\bf summation polynomial} is denoted by $f_n$ and defined recursively by
$$f_3(z_1,z_2,z_3)=(z_1^2z_2^2-z_1^2-z_2^2+a{d^{-1}})z_3^2+2(d-a){d^{-1}}z_1z_2z_3+a{d^{-1}}(z_1^2+z_2^2-1)-z_1^2z_2^2,$$
$$f_n(z_1,\dots,z_n)=\res_{t}(f_{n-k}(z_1,\dots,z_{n-k-1},t),f_{k+2}(z_{n-k},\dots,z_n,t))$$ for all $n\geq 4$ and for all $1\leq k \leq n-3$, where $\res_{t}(f_i,f_j)$ denotes the resultant of $f_i$ and $f_j$ with respect to $t$.
\end{definition}

The next theorem summarizes the properties of summation polynomials.

\begin{theorem}[\cite{SemPol} Section 2 and \cite{SemPolEd} Section 2.3.1]
%Let $\FF_q$ be a finite field of odd characteristic and let $E_{a,d}$ be the twisted Edwards curve of equation 
%$$ax^2+y^2=1+dx^2y^2$$ where $a,d\in\FF_q \setminus\{0\}$ and $a\neq d$. 

Let $n\geq 3$, let $f_n\in\FF_q[z_1,\ldots,z_n]$ be the n-th summation polynomial. Denote by $\FF_q\subset k$ a field extension, and by $\overline{k}$ its algebraic closure. Then:\begin{enumerate}
\item $f_n$ is absolutely irreducible, symmetric, and has degree $2^{n-2}$ in each of the variables. 
\item $(\beta_1,\ldots,\beta_n)\in k^n$ is a root of $f_n$ if and only if there exist $\alpha_1,\ldots,\alpha_n\in\overline{k}$ such that $P_i=(\alpha_i,\beta_i)\in E_{a,d}(\overline{k})$ and $P_1+\ldots+P_n=\OO$.
\end{enumerate}
\end{theorem}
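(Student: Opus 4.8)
The plan is to argue by induction on $n$, proving the root characterization (part 2) together with the degree count, and then deducing absolute irreducibility and symmetry. For the base case $n=3$, note that $P_1+P_2+P_3=\OO$ is equivalent to $P_3=-(P_1+P_2)$; since $-(x,y)=(-x,y)$, a point and its negative share the same $y$-coordinate, so this forces $\beta_3=y(P_1+P_2)$, and conversely given this equality the $x$-coordinate $\alpha_3$ is determined and $(\alpha_3,\beta_3)$ automatically lies on $E_{a,d}$. I would write $y(P_1+P_2)$ via the addition law, substitute $\alpha_i^2=(1-\beta_i^2)/(a-d\beta_i^2)$ from the curve equation, isolate the term $\alpha_1\alpha_2$, and square to eliminate the dependence on the signs of $\alpha_1,\alpha_2$. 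A direct (routine) computation then identifies the resulting polynomial with $f_3$ up to a nonzero scalar, the finitely many degenerate configurations (vanishing denominators, points at infinity) being checked separately.

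For the inductive step, fix $1\le k\le n-3$ and set $g=f_{n-k}(\beta_1,\ldots,\beta_{n-k-1},t)$ and $h=f_{k+2}(\beta_{n-k},\ldots,\beta_n,t)$ in $\overline{k}[t]$. The key fact is that $\res_t(g,h)$ vanishes precisely when $g$ and $h$ have a common root $\tau\in\overline{k}$, up to the simultaneous vanishing of both leading coefficients in $t$. For the ``if'' direction, given $\sum_i P_i=\OO$ put $S=P_1+\cdots+P_{n-k-1}$ and $\tau=y(S)$; then $P_1+\cdots+P_{n-k-1}+(-S)=\OO$ and $P_{n-k}+\cdots+P_n+S=\OO$, and since both $S$ and $-S$ have $y$-coordinate $\tau$ the inductive hypothesis gives $g(\tau)=h(\tau)=0$, so $f_n(\beta_1,\ldots,\beta_n)=0$. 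For the ``only if'' direction, a common root $\tau$ produces, via the inductive hypothesis, points of $y$-coordinates $\beta_1,\ldots,\beta_{n-k-1}$ together with $S_1$ of $y$-coordinate $\tau$ summing to $\OO$, and points of $y$-coordinates $\beta_{n-k},\ldots,\beta_n$ together with $S_2$ of $y$-coordinate $\tau$ summing to $\OO$. Then $S_2=\pm S_1$: if $S_2=-S_1$ the two relations combine immediately to $P_1+\cdots+P_n=\OO$, while if $S_2=S_1$ I would negate each of $P_{n-k},\ldots,P_n$, which preserves their $y$-coordinates and flips the sign of their partial sum, again yielding $P_1+\cdots+P_n=\OO$. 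This sign flip is the genuinely delicate combinatorial point, and it is exactly what makes the recursion correct at the level of $y$-coordinates. The degree follows from the same recursion: since $z_1$ occurs only in $g$, the resultant degree bound gives $\deg_{z_1}f_n=\deg_t(h)\cdot\deg_{z_1}(f_{n-k})=2^{k}\cdot 2^{n-k-2}=2^{n-2}$, with equality confirmed by inspecting the leading forms (which also yields independence of the construction from $k$). The case where both leading coefficients in $t$ vanish corresponds to a partial sum equal to a point at infinity, and I would check by hand that it introduces no spurious element of $k^n$ lacking the required decomposition.

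Absolute irreducibility is the main obstacle, because the resultant of irreducible polynomials need not be irreducible, so this cannot be extracted from the recursion and demands a geometric argument. I would consider the morphism $\psi\colon E_{a,d}^{\,n-1}\to\mathbb{A}^n$ sending $(P_1,\ldots,P_{n-1})$ to $\bigl(y(P_1),\ldots,y(P_{n-1}),\,y(P_1+\cdots+P_{n-1})\bigr)$. Its source is irreducible, being a product of irreducible curves, so the Zariski closure $Z$ of the image is irreducible, and by part 2 one has $Z\subseteq V(f_n)$. The first $n-1$ coordinates already give a dominant map to $\mathbb{A}^{n-1}$ with finite fibres, so $Z$ has dimension $n-1$ and is therefore a hypersurface equal to $V(f_n)$, whence $f_n=c\,p^{e}$ with $p$ irreducible. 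The hardest remaining step is to rule out multiplicity, i.e. to show $e=1$: here I would compare the per-variable degree $2^{n-2}$ with the generic fibre of the projection $V(f_n)\to\mathbb{A}^{n-1}$ forgetting the last coordinate, whose cardinality is counted by the sign choices for $P_1,\ldots,P_{n-1}$ identified under global negation, giving exactly $2^{n-2}$; matching this against $\deg_{z_n}f_n=2^{n-2}$ leaves no room for a repeated factor, so $f_n$ is reduced and hence absolutely irreducible. Finally, symmetry follows formally: the condition in part 2 is invariant under permuting the $\beta_i$, so each $f_n(z_{\sigma(1)},\ldots,z_{\sigma(n)})$ is a scalar multiple of the now irreducible $f_n$, and comparing the equal degrees in each variable forces the scalar to be $1$.
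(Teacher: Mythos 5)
This theorem is not proved in the paper at all: it is imported verbatim from \cite{SemPol} (Section 2) and \cite{SemPolEd} (Section 2.3.1), so there is no in-paper argument to compare yours against. Your sketch is essentially the standard proof from those sources, and its outline is sound: the base case via the addition law and squaring out the sign of $\alpha_1\alpha_2$; the inductive step via the vanishing criterion for $\res_t$, with the correct handling of the $S_2=\pm S_1$ dichotomy by negating one block of points; the degree count from the bihomogeneity of the Sylvester determinant; and irreducibility via the irreducible image of $E_{a,d}^{\,n-1}\to\mathbb{A}^n$ together with a fibre count of $2^{n-2}$ to exclude a repeated factor. Three points deserve more care than your sketch gives them. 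First, for the reverse inclusion $V(f_n)\subseteq Z$ (needed so that $V(f_n)$ has no component other than $Z$) you must invoke part 2 again, and the degenerate locus you defer --- values $\beta_i$ with $a-d\beta_i^2=0$, for which no affine point of $E_{a,d}$ has that $y$-coordinate, and partial sums landing at the singular points at infinity $\Omega_1,\Omega_2$ --- genuinely occurs for twisted Edwards curves (the addition law is not complete in general), so ``checked separately'' is doing real work there. Second, the multiplicity-one step equates $\deg_{z_n}p$ with the number of distinct points in the generic fibre; in odd characteristic this requires $p$ to be separable in $z_n$, which you can get for free from the fact that $\deg_{z_n}p$ divides $2^{n-2}$ while the characteristic is odd, but it should be said. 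Third, the definition uses the recursion for every $1\le k\le n-3$ simultaneously, so well-definedness (independence of $k$, not just up to scalar) is implicitly part of the claim; your remark about leading forms gestures at this but does not settle the normalization. None of these is a fatal gap, but they are exactly the places where a written-out proof would have to do more than your outline does.
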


By the previous theorem, if $P=(x,y)\in \mathcal{T}_n$, then
\begin{equation}\label{repr_equation}
f_n(y,y^q,\dots,y^{q^{n-1}})=0.\end{equation}
A partial converse and exceptions to the opposite implication are given in the next proposition.

\begin{proposition}{\em(\!\!\cite[Lemma 1 and Proposition 4]{EM1})}\label{exc}
Let $E_{a,d}$ be a twisted Edwards curve and denote by $E_{a,d}[m]$ its m-torsion points. We have:
\begin{itemize}
\item[\rm (1)] $\mathcal{T}_3 = \{(x,y) \in E_{a,d}(\mathbb{F}_{q^3})\mid f_3(y,y^q,y^{q^2})=0\}$,
\item[\rm (2)] $\mathcal{T}_5\cup E_{a,d}[3](\mathbb{F}_q) = \{(x,y) \in E_{a,d}(\mathbb{F}_{q^5})\mbox{ }  | \mbox{ } f_5(y,y^q,\dots,y^{q^4})=0\}$,
\item[\rm (3)] $\mathcal{T}_n\cup\bigcup_{k=1}^{\lfloor \frac{n}{2} \rfloor}E_{a,d}[n-2k](\mathbb{F}_q) \subseteq \{ (x,y) \in E_{a,d}(\mathbb{F}_{q^n}) \mid f_n(y,y^q,\ldots,y^{q^{n-1}})=0\}$ for $n\geq 7$.
\end{itemize}
\end{proposition}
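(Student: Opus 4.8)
My plan is to prove each statement by treating the two inclusions separately, using only the root characterization of $f_n$ (part~2 of the summation polynomial theorem above) together with the fact that the two points of $E_{a,d}(\overline{\FF_q})$ with second coordinate $y^{q^i}$ are exactly $\varphi^i(P)$ and $-\varphi^i(P)$, where $P=(x,y)$. For the easy inclusion I first note that if $P\in\mathcal{T}_n$ then $P+\varphi(P)+\dots+\varphi^{n-1}(P)=\OO$, so $f_n(y,y^q,\dots,y^{q^{n-1}})=0$ by the theorem; this is \eqref{repr_equation}. If instead $P\in E_{a,d}[n-2k](\FF_q)$ for some $1\le k\le\lfloor n/2\rfloor$, then $y\in\FF_q$ forces $y^{q^i}=y$ for all $i$, so the summation tuple is $(y,\dots,y)$; taking $n-k$ copies of $P$ and $k$ copies of $-P$ gives a sum $(n-2k)P=\OO$ of $n$ points all with second coordinate $y$, and the theorem yields $f_n(y,\dots,y)=0$. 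This already proves (3) and the inclusion $\supseteq$ in (1) and (2).

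\textbf{The reverse inclusions: reduction to a signed Frobenius relation.}
For the equalities in (1) and (2), suppose $f_n(y,y^q,\dots,y^{q^{n-1}})=0$. The theorem provides signs $\epsilon_0,\dots,\epsilon_{n-1}\in\{\pm1\}$ with $\sum_{i=0}^{n-1}\epsilon_i\varphi^i(P)=\OO$; writing $g(T)=\sum_i\epsilon_iT^i\in\Z[T]$ this reads $g(\varphi)P=\OO$, while in addition $(\varphi^n-\mathrm{id})P=\OO$ because $P\in E_{a,d}(\FF_{q^n})$. If $2P=\OO$ the signs are irrelevant and the relation collapses to $\mathrm{Tr}(P)=\OO$, so $P\in\mathcal{T}_n$. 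Otherwise I examine $\gcd(g,T^n-1)$ over $\Q$. Since $n$ is an odd prime, $T^n-1=(T-1)\Phi_n(T)$ with $\Phi_n=1+T+\dots+T^{n-1}$ irreducible; and $g(1)=\sum_i\epsilon_i$ is a sum of an odd number of $\pm1$'s, hence odd and nonzero, so $(T-1)\nmid g$. Thus either $\Phi_n\mid g$, which by a degree count forces $g=\pm\Phi_n$, i.e.\ all signs equal and $P\in\mathcal{T}_n$; or $g$ and $T^n-1$ are coprime, and the identity $u(T)g(T)+v(T)(T^n-1)=\mathrm{Res}(g,T^n-1)$ with $u,v\in\Z[T]$, applied to $P$, gives $\mathrm{Res}(g,T^n-1)\cdot P=\OO$, so that $P$ is a torsion point.

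\textbf{Identifying the torsion, and the main obstacle.}
It remains to pin down these torsion points for $n=3,5$, and this is where the real work lies. For $n=3$ the only mixed pattern, up to cyclic shift and global sign, is $(+,+,-)$; combining the relation with its Frobenius shifts yields $2P=\OO$, whence $P\in\mathcal{T}_3$, and since $n-2k=1$ contributes only $\OO$ this proves (1). For $n=5$ one runs through the finitely many sign patterns (up to cyclic shift and global sign) and extracts from each the resulting annihilator of $P$. The obstacle is that the resultant is too coarse an annihilator, a priori admitting spurious orders such as $6$ or powers of $2$, so the exact exceptions must be isolated by a finer argument. The decisive input is that $n=5$ is prime: a torsion point of order divisible by $\ell$ lying in $E_{a,d}(\FF_{q^5})$ has field of definition of degree dividing both $5$ and the order of Frobenius on the corresponding $\ell$-primary torsion, and for $\ell\in\{2,3\}$ this order divides $|\mathrm{GL}_2(\FF_2)|=6$ or $|\mathrm{GL}_2(\FF_3)|=48$, neither divisible by $5$; hence all such torsion is already $\FF_q$-rational. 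Moreover $5\nmid\mathrm{Res}(g,T^5-1)$ (because $|g(1)|\le 3$), so no $5$-power torsion occurs. Feeding rationality back into the sign relations shows the rational $2$-power contributions collapse to $\OO$ or into $\mathcal{T}_5$, while exactly the $\FF_q$-rational $3$-torsion points survive, giving the equality in (2). By contrast (3) asserts only the easy inclusion, so for $n\ge 7$ this delicate analysis is not carried out.
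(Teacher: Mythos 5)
Your proof is correct and follows essentially the same route as the paper, which simply defers to the sign-pattern analysis of \cite[Lemma 1 and Proposition 4]{EM1} after noting that $E_{a,d}[2]\cap\mathcal{T}_n=\{\mathcal{O}\}$ --- an observation that your ``$2P=\mathcal{O}$ collapses to the trace relation'' case handles equivalently. The only nitpick is that the torsion-rationality step should invoke $\mathrm{GL}_2(\mathbb{Z}/2^a3^b\mathbb{Z})$ rather than just $\mathrm{GL}_2(\mathbb{F}_\ell)$, since your resultants only bound the order by $48$ or $16$ rather than by $\ell$; as the order of that larger group is still a product of powers of $2$ and $3$, hence prime to $5$, the conclusion is unaffected.
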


\begin{proof} The proof proceeds as in Lemma 1 and Proposition 4 of \cite{EM1}, after observing that for any odd prime $n$ one has $E_{a,d}[2]\cap \mathcal{T}_n = \{ \mathcal{O} \}$.
\end{proof}

\begin{remark}\label{exc_rmk}
Proposition~\ref{exc} raises the question of efficiently deciding, for each root $y\in\FF_{q^n}$ of equation (\ref{repr_equation}), whether the corresponding points $(\pm x,y)\in E_{a,d}$ are elements of $\mathcal{T}_n$. However, this issue is easily solved in the two cases of major interest $n=3$ and $n=5$. In fact:
\begin{itemize}
\item By Proposition~\ref{exc} (1), $(\pm x,y)\in \mathcal{T}_3$ if and only if $x\in \mathbb{F}_{q^3}$.
\item By Proposition~\ref{exc} (2), $(\pm x,y)\in \mathcal{T}_5$ if and only if $x \in \mathbb{F}_{q^5}$ and $(\pm x,y)\not \in E_{a,d}[3](\mathbb{F}_q)\setminus \{\mathcal{O}\}$. By storing the list $\mathcal{L}$ of the \textit{y}-coordinates of the elements of $E_{a,d}[3](\FF_q)\setminus\{\OO\}$, one can easily decide whether a point of $E_{a,d}(\FF_{q^5})$ of coordinates $(x,y)$ belongs to $\mathcal{T}_n$ by checking that $y \not\in\mathcal{L}$. Notice that $\mathcal{L}$ consists of at most 4 elements of $\FF_q$.
\end{itemize} 
\end{remark}

Using the above considerations as a starting point, we can give an optimal representation for the points of $\mathcal{T}_n$ with efficient compression and decompression algorithms.

\medskip 

{\bf 1.} Denote by $e_1,\ldots,e_n$ the elementary symmetric functions in $n$ variables.
Represent $(x,y)\in \mathcal{T}_n$ via $n-1$ of the elementary symmetric functions evaluated at $y,y^q,\ldots,y^{q^{n-1}}$.
We obtain an efficiently computable optimal representation
\begin{equation}\label{repr_sem}
\begin{array}{rccl}
\mathcal{R} : & \mathcal{T}_n & \longrightarrow & \FF_q^{n-1} \\
& (x,y) & \longmapsto &(e_i(y,y^q,\dots,y^{q^{n-1}}))_{i=1,\dots,n-1}.\end{array}\end{equation}
\medskip

{\bf 2.} Since the  polynomial $f_n(z_1,\ldots,z_n)$ is symmetric, we can write it uniquely as a polynomial $g_n(e_1,\dots,e_n)\in\FF_q[e_1,\ldots,e_n]$. 
Therefore, the equation
$$g_n(e_1,\dots,e_n)=0$$
describes trace zero points (with the exceptions seen in Proposition~\ref{exc}) via the equations
\begin{equation}\label{sistsym} e_1=\tilde{e}_1(y_0,\dots,y_{n-1}),\ldots,e_n=\tilde{e}_n(y_0,\dots,y_{n-1}),\end{equation}
where the polynomials $\tilde{e_1},\ldots,\tilde{e_n}$ are obtained from the polynomials
$$e_1(y,y^q,\dots,y^{q^{n-1}}),\ldots,e_n(y,y^q,\dots,y^{q^{n-1}})$$
by Weil restriction of scalars with respect to the chosen basis of $\mathbb{F}_{q^n}$ over $\mathbb{F}_q$, and reducing modulo $y_i^q-y_i$ for $i\in\{0,\ldots,n-1\}$. 
Notice that the reduction simplifies the equations by drastically reducing their degrees. 
Moreover, it does not alter their values when evaluated over $\FF_q$. 

\medskip

{\bf 3.} For $(e_1,\ldots,e_{n-1})\in\mathcal{R}(\mathcal{T}_n)$, we first solve $g_n(e_1,\ldots,e_{n-1},t)=0$ for $t$. For any solution $e_n\in\FF_q$, we solve system (\ref{sistsym}) to find $(y_0,\ldots,y_{n-1})\in\FF_q^n$, corresponding to $y\in\FF_{q^n}$. From $y$ we can recover $x$ in the usual way (see also Remark~\ref{exc_rmk}).

\bigskip

Notice that $g_n(e_1,\ldots,e_n,)$ is not linear in any of the variables for $n\geq 3$, hence in 3. we may find more than one value for $e_n$. This corresponds to the fact that $\mathcal{R}$ may identify more than just opposites and Frobenius conjugates. 
However this is a rare phenomenon, and for a generic point $P\in\mathcal{T}_n$, $\mathcal{R}^{-1}(\mathcal{R}(P))$ consists only of $\pm P$ and their Frobenius conjugates. We come back to this discussion in Subsection~\ref{n=5}, where we discuss this issue for $n=5$.

We now give the pseudocode of a compression and decompression algorithm for the elements of $\mathcal{T}_n$.
\newpage %\medskip

\hrule
\begin{algorithm}[{\bf Compression}]\textbf{ }\\
{
\hrule
\textbf{ }\\
\newline
\textbf{Input} : $P=(x,y) \in \mathcal{T}_n$\\
\textbf{Output} : $\mathcal{R}(P) \in \mathbb{F}_q^{n-1}$\\
\newline
\begin{footnotesize} 1: \end{footnotesize} Write $y=y_0\alpha+\ldots+y_{n-1}\alpha^{q^{n-1}}$.\\
\begin{footnotesize} 2: \end{footnotesize} Compute $e_i = \tilde{e}_i(y_0,\dots,y_{n-1})$ for $i = 1, \dots , n-1$.\\
\begin{footnotesize} 3: \end{footnotesize} {\bf return} $(e_1,\ldots,e_{n-1})$\\
\hrule
}
\end{algorithm}

\textbf{ }\\
\hrule

\begin{algorithm}[{\bf Decompression}]\label{aldecsem}
\textbf{ }\\
{
\hrule
\textbf{ }\\
\newline
\textbf{Input} : $(e_1,\dots,e_{n-1}) \in \mathbb{F}_q^{n-1}$\\
\textbf{Output} : $\mathcal{R}^{-1}(e_1,\dots,e_{n-1})\subseteq\mathcal{T}_n$\\
\newline
\begin{footnotesize}1: \end{footnotesize} Solve $g_n(e_1,\dots,e_{n-1},t)=0$ for $t$ in $\mathbb{F}_q$.\\
\begin{footnotesize}2: \end{footnotesize} $T\leftarrow$ list of solutions of $g_n(e_1,\dots,e_{n-1},t)=0$ in $\mathbb{F}_q$.\\
\begin{footnotesize}3: \end{footnotesize} \textbf{for} $e_n\in T$, find a solution in $\FF_q^n$of the system\\
$\left\{\begin{tabular}{rcl}
$e_1$ & $=$ & $\tilde{e}_1(y_0,\dots,y_{n-1})$ \\
&$\vdots $&\\
$e_n$ &  $=$ & $\tilde{e}_n(y_0,\dots,y_{n-1})$ \\
\end{tabular}\right.$ if it exists.\\
\begin{footnotesize}4: \end{footnotesize} Any time a solution $(y_0,\dots,y_{n-1})$ is found, compute $y=y_0\alpha+\dots+y_{n-1}\alpha^{q^{n-1}}$.\\
\begin{footnotesize}5: \end{footnotesize} Recover one of the corresponding \textit{x}-coordinates using the curve equation.\\
\begin{footnotesize}6: \end{footnotesize}\textbf{ end for}\\ 
\begin{footnotesize}7: \end{footnotesize} \textbf{if} $(x,y) \in \mathcal{T}_n$ \textbf{then}\\
\begin{footnotesize}8: \end{footnotesize} Add $P=(\pm x,y)$ and all its Frobenius conjugates to the list $L$ of output points.\\
\begin{footnotesize}9: \end{footnotesize} \textbf{end if}\\
\begin{footnotesize}10: \end{footnotesize}{\bf return} $L$\\
\hrule
}
\end{algorithm}

\subsection{Explicit equations, complexity, and timings for $n=3$} 

In this subsection we give explicit equations for trace zero point compression and decompression on twisted Edwards curves for $n=3$. We also estimate the number of operations needed for the computations, present some timings obtained with Magma, and compare with the results from~\cite{EM1} for elliptic curves in short Weierstrass form. 
 
The symmetrized third summation polynomial for $E_{a,d}$ is
\begin{equation}\label{g3Ed}
g_3(e_1,e_2,e_3)=e_1^2 - 1 + (d/a)(e_3^2-e_2^2) +(2d/a)e_1e_3-2e_2+((-2a+2d)/a)e_3,
\end{equation}
where $e_1$, $e_2$ and $e_3$ are the elementary symmetric polynomials in $y, y^q, y^{q^2}$:
\begin{equation}\label{si}\left\{\begin{tabular}{lcl}
$e_1$ & $=$ & $y+y^q+y^{q^2}$ \\
$e_2$ & $=$ & $y^{1+q}+y^{1+q^2}+y^{q+q^2}$\\
$e_3$ & $=$ & $y^{1+q+q^2}$.\\
\end{tabular}\right. 
\end{equation}

The symmetrized third summation polynomial for an elliptic curve in short Weierstrass form is
\begin{equation}\label{g3Wei}G_3(e_1,e_2,e_3)= e_2^2-4e_1e_3-4Be_1-2Ae_2+A^2.\end{equation}
Notice that, while $G_3$ is linear in $e_1$ and $e_3$, $g_3$ is of degree $2$ in each variable. In particular, none of $e_1,e_2,e_3$ is determined uniquely by the other two as is the case of elliptic curves in Weierstrass form. 
However, applying the change of coordinates
\begin{equation}\label{ti}\left\{\begin{tabular}{lcl}
$t_1$ & $=$ & $e_1$ \\
$t_2$ & $=$ & $e_3+e_2$\\
$t_3$ & $=$ & $e_3-e_2$\\
\end{tabular}\right.
\end{equation}
to $g_3$, we obtain the polynomial
\begin{equation}\label{g3tilda}
\tilde{g}_3(t_1,t_2,t_3)=t_1^2+(d/a)(t_2t_3+t_1t_2+t_1t_3)+((d/a)-2)t_2+dt_3-1,
\end{equation}
that is linear in both $t_2$ and $t_3$.

Applying Weil restriction of scalars to the combination of (\ref{si}) and (\ref{ti}) we obtain
\begin{equation}\label{tiWeil}\left\{\begin{tabular}{lcl}
$t_1$ & $=$ & $3y_0$ \\
$t_2$ & $=$ & $y_0^3-3\mu y_0y_1y_2+\mu y_1^3+\mu^2y_2^3+3y_0^2-3\mu y_1y_2$\\
$t_3$ & $=$ & $y_0^3-3\mu y_0y_1y_2+\mu y_1^3+\mu^2y_2^3-3y_0^2+3\mu y_1y_2$\\
\end{tabular}\right. \end{equation}
which express $t_1,t_2,t_3$ as polynomials in $y_0,y_1,y_2$.

\medskip
{\bf Point Compression}. For compression of a point $P=(x,y)\in \mathcal{T}_3$ we use the first two coordinates from~(\ref{ti}) and (\ref{tiWeil}), obtaining
$$\mathcal{R}(P)= (t_1,t_2) = (3y_0,y_0^3-3\mu y_0y_1y_2+\mu y_1^3+\mu^2y_2^3+3y_0^2-3\mu y_1y_2).$$
If we compute $t_2$ as $(y_0+1)(y_0^2-3\mu y_1y_2)+\mu y_1^3+\mu^2y_2^3+2y_0^2$, the cost of computing $\mathcal{R}(P)$ is 3S+4M in $\FF_q$. In the case of elliptic curves in short Weierstrass form, computing the representation of a point is less expensive, as it takes 1S+1M in $\FF_q$ or 1M  in $\FF_q$ with the two methods presented in~\cite[Section 5]{EM1}. 

\medskip
{\bf Point Decompression}. In order to decompress $(t_1,t_2)\in\im\mathcal{R}$ we proceed as follows.\\
\newline
\noindent
{\bf 1.} Given $(t_1,t_2)\in \im\mathcal{R}$, solve $\tilde{g}_3(t_1,t_2,t_3)=0$  for $t_3$. 
If $t_1+t_2+a=0$, then $\tilde{g}_3(t_1,t_2,t_3)=0$ for all $t_3\in\FF_q$. 
If $t_1+t_2+a\neq 0$, then
$$t_3 = -\frac{((d/a)-2)t_2+(d/a)t_1t_2+(t_1+1)(t_1-1)}{(d/a)(t_1+t_2+a)}.$$
Hence $t_3$ can be computed with 3M+1I in $\FF_q$.\\
\newline
{\bf 2.} Given $(t_1,t_2,t_3)$, we solve system (\ref{tiWeil}) for $y_0$, $y_1$, $y_2$.
Notice that, since the $t_i$'s are obtained from the $e_i$'s by a linear change of coordinates, all considerations from \cite{EM1} apply to our situation. In particular, one can compute $y$ from $(t_1,t_2,t_3)$ with at most 3S+3M+1I, 1 square root and 2 cube roots in $\mathbb{F}_q$.\\

Summarizing, the complete decompression algorithm takes at most 3S+6M+2I, 1 square root, and 2 cube roots in $\mathbb{F}_q$. For elliptic curves in short Weierstrass form, decompression takes at most 3S+5M+2I, 1 square root, and 2 cube roots in $\mathbb{F}_q$ or 4S+4M+2I, 1 square roots and 2 cube roots in $\mathbb{F}_q$, depending on the method used. We refer the interested reader to \cite[Section 5]{EM1} for details on the complexity of the computation for curves in short Weierstrass form.

\begin{remark}
Notice that one can also use $(t_1,t_3)$ as an optimal representation of $(x,y)\in \mathcal{T}_3$, and then solve $\tilde{g_3}$ for $t_2$ in order to recover $y$. This choice is analogous to the one we have made, and the computational cost of compression and decompression does not  change.
\end{remark}

\begin{remark}\label{slow}
The symmetry of twisted Edwards curves makes the computation of point addition on these curves more efficient than on elliptic curves in short Weierstrass form. However, the same symmetry results in summation polynomials of higher degree and with a denser support. This explains our empirical observation that the summation polynomials in the elementary symmetric functions for elliptic curves in short Weierstrass form are sparser than those for twisted Edwards curves for $n=3,5$, even though for both curves they have the same degree $2^{n-2}$. For $n=3$, this behavior is apparent if one compares equations (\ref{g3Ed}) and (\ref{g3Wei}).
Therefore, one should expect that compression and decompression for a representation based on summation polynomials  for twisted Edwards curves are less efficient than for elliptic curves in short Weierstrass form. This is confirmed by  our findings.
\end{remark}

The following examples and statistics have been implemented in Magma \cite{magma}.

\begin{example}\label{exsem3}
Let $q=2^{79}-67$ and $\mu=3$.
We choose random curves, defined and birationally equivalent over $\mathbb{F}_q$:
$$E_{a,d}: 31468753957068040687814x^2+y^2=1+192697821276638966498997x^2y^2$$
and
$$E: y^2 = x^3 + 292467848427659499478503x + 361361026736404004345421.$$
We choose  a random point of trace zero $P'\in E(\FF_{q^3})$, and let $P$ be the corresponding point on $E_{a,d}$.
For brevity, here we only write the \textit{x}-coordinates of points of $E$ and the \textit{y}-coordinates of points of $E_{a,d}$:
$$P'= 346560928146076959314753\xi^2 + 456826539628535981034212\xi + 344167470403026652826672,$$
$$P=208520713897518236215966\xi^2 + 451121944550219947368811\xi + 68041089860429901306252.$$
We represent the points of $E$ using the compression coordinates $(t_1,t_2)$ from \cite[Section 5]{EM1}. Denote by $\mathcal{R}$ and $\mathcal{R}'$ the representation maps on $E_{a,d}$ and $E$, respectively.
We compute
$$\mathcal{R}'(P') = (344167470403026652826672, 334324534997495805088214),$$
$$\mathcal{R}(P)= (204123269581289703918756, 98788782936076524413527).$$
We now apply the corresponding decompression algorithms to $\mathcal{R}'(P')$ and $\mathcal{R}(P)$. We obtain
$$\mathcal{R'}^{-1}(344167470403026652826672, 334324534997495805088214)=$$
$$\{346560928146076959314753\xi^2 + 456826539628535981034212\xi + 344167470403026652826672,$$
$$164759498614507503187493\xi^2 + 361520690988197751534381\xi + 344167470403026652826672,$$
$$93142483046730124850775\xi^2 + 390578588997895442137449\xi + 344167470403026652826672\},$$
which are exactly the \textit{x}-coordinate of $P'$ and its Frobenius conjugates.
Similarly $$\mathcal{R}^{-1}(204123269581289703918756, 98788782936076524413527)=$$
$$\{208520713897518236215966\xi^2 + 451121944550219947368811\xi + 68041089860429901306252,$$
$$539321536961066855011167\xi^2 + 237431391097642968386719\xi + 68041089860429901306252,$$
$$461083568756044083478909\xi^2 + 520372483966766258950512\xi + 68041089860429901306252\},$$
which are exactly the \textit{y}-coordinate of $P$ and its Frobenius conjugates.
\end{example}

We now give an estimate of the average time of compression and decompression for groups of different bit-size.
We consider primes $q_1$, $q_2$, and $q_3$ such that $3|q_i-1$ for all $i$, of bit-length $96$, $112$, and $128$,  respectively. For each $q_i$, we consider five pairs of birationally equivalent curves $(E,E_{a,d})$ defined over $\mathbb{F}_{q_i}$, such that the order of $\mathcal{T}_3$ is prime of bit-length respectively $192$, $224$ and $256$.
On each pair of curves we randomly choose $20'000$ pairs of points $(P',P)$ of trace zero, as in Example \ref{exsem3}.
For each pair of points, we compute
$\mathcal{R'}(P'), \mathcal{R}(P), \mathcal{R'}^{-1}(\mathcal{R'}(P')), \mathcal{R}^{-1}(\mathcal{R}(P)).$
For each computation, we consider the average time in milliseconds for each curve, and then the averages over the five curves. The average computation times are reported in the table below.

\bigskip
\noindent\textbf{Table 1}.\\
\newline
\begin{tabular}{|l|c|c|c|}
\hline
Bit-length of $|\mathcal{T}_3|$ & $192$ & $224$ & $256$   \\
\hline
Compression on $E$  & $0.006$ & $0.005$ & $0.006$  \\
\hline
Compression on $E_{a,d}$  & $0.016$ & $0.017$ & $0.015$  \\
\hline
Decompression on $E$  & $0.81$ & $2.40$ & $1.20$  \\
\hline
Decompression on $E_{a,d}$ & $0.88$ & $2.44$ & $1.17$ \\
\hline
\end{tabular}\\

\bigskip
The following table contains the ratios between the average times for point compression and decompression on elliptic curves in short Weierstrass form and twisted Edwards curves.

\bigskip
\noindent\textbf{Table 2}.\\
\newline
\begin{tabular}{|l|c|c|c|}
\hline
 Bit-length of $|\mathcal{T}_3|$& $192$ & $224$ & $256$ \\
\hline
Comp on $E$ / Comp on $E_{a,d}$ & $0.375$ & $0.294$ & $0.400$ \\
\hline
Dec on $E$ / Dec on $E_{a,d}$ & $0.920$ & $0.984$ & $1.026$ \\
\hline
\end{tabular}\\

\subsection{Explicit equations, complexity, and timings for $n=5$}\label{n=5}

In this subsection we treat in detail the case $n=5$. We compute explicit equations for compression and decompression, give an estimate of the complexity of the computations in terms of the number of operations, and give some timings computed in Magma. We also compare with the results obtained in~\cite{EM1} for elliptic curves in short Weierstrass form.

The fifth Semaev polynomial $f_5$ for a twisted Edwards curve has degree $40$, while for curves in short Weierstrass form it has degree $32$. The first polynomial also contains many more terms than the second. This agrees with what we observed in Remark~\ref{slow} for the case $n=3$. The symmetrized fifth summation polynomial $g_5$ has degree $8$ for both Weierstrass and Edwards curves. However, for Edwards curves $g_5$ has degree $8$ in each variable, while for elliptic curves in short Weierstrass form it has degree $6$ in some of the variables. Because of these reasons, we expect that compression and decompression for a trace zero subgroup group coming from a twisted Edwards curve are less efficient than for one coming from a curve in short Weierstrass form. 

For fields such that $16 | q-1$, we perform a linear change of coordinates on the $s_i$'s in order to obtain a polynomial $\tilde{g}_5$, of degree strictly less than $8$ in some variable. 
%However, this is more complicated than in the case $n=3$, as $g_5$ has more variables and much higher degree than $g_3$. 
The polynomial $g_5$ is too big to be printed here. However, denoting by $(g_5)_8$ the part of $g_5$ which is homogeneous of degree $8$, we have:
\begin{equation}\label{g58}(g_5)_8(e_1,\dots,e_5)= e_1^8 + (d/a)^4(e_2^8+e_3^8)+(d/a)^8(e_4^8+e_5^8).\end{equation}
Let $\mu_1\in\overline{\mathbb{F}}_q$ be a primitive $16$-th roots of unity.
Then we can factor $t^8+s^8$ over $\FF_q$ as
$$t^8+s^8=(t-\mu_1s)(t+\mu_1s)r_6(t,s).$$
Therefore, (\ref{g58}) can be written in the form
$$(g_5)_8=e_1^8+(d/a)^4(e_2-\mu_1e_3)(e_2+\mu_1e_3)p_6(e_2,e_3)+(d/a)^8(e_4^8+e_5^8).$$
Hence, after performing the change of coordinates
$$\left\{\begin{tabular}{lcl}
$t_2$ & $=$ & $e_2-\mu_1e_3$ \\
$t_3$ & $=$ & $e_2+\mu_1e_3$ \\
$t_i$ & $=$ & $e_i \mbox{ for } i=1,4,5$ 
\end{tabular}\right.$$
we obtain a polynomial $\tilde{g}_5(t_1,\ldots,t_5)$ of degree $8$ in $t_1,t_4,t_5$, and degree $7$ in $t_2,t_3$.
%The same may be done on e_4,e_5.

\begin{example}
Let $q = 2^{10}-3$, $\mu=2$. Consider the Edwards curve $E_{1,486}$ of equation $x^2+y^2 = 1+6x^2y^2$.
Let $P\in \mathcal{T}_5$ be the point
$$P =(u,v)= (951\xi^4 + 338\xi^3 + 246\xi^2 + 934\xi + 133, 650\xi^4 + 927\xi^3 + 301\xi^2 + 171\xi + 973).$$
The compression of $P$ is $\mathcal{R}(P)=(e_1,e_2,e_3,e_4)=(686,289,865,418).$
In order to decompress, we solve
$$g_5(e_1,e_2,e_3,e_4,t) = g_5(686,289,865,418,t)=$$ 
$$71t^8 + 705t^7 + 1007t^6 + 970t^5 + 233t^4 + 1014t^3 + 356t^2 + 198t + 575=0,$$
which has a unique solution $e_5=790\in\mathbb{F}_{q}$.
In order to recover the value of $y$ up to Frobenius conjugates, we find a root in $\FF_{q^5}$ of 
$$y^5 - e_1y^4 + e_2y^3 -e_3y^2 + e_4y -e_5 = y^5 + 335y^4 + 289y^3 + 156y^2 + 418y + 231.$$
Notice that the five roots are Frobenius conjugates of each other. From one $y\in\FF_{q^5}$ we can recompute $x$ via the curve equation, hence recover one of the Frobenius conjugates of $\pm P$.
So the decompression algorithm returns $\mathcal{R}^{-1}(\mathcal{R}(P))=\{\pm P, \pm\varphi(P), \pm\varphi^2(P), \pm\varphi^3(P), \pm\varphi^4(P)\}.$
\end{example}

We now give an example that presents some indeterminacy in the decompression algorithm. 

\begin{example}\label{es_sem_ind}
Let $q = 2^{10}-3$ and consider the Edwards curve
$$E_{210,924} : 210x^2+y^2= 1 + 924x^2y^2$$
and the point
$$P = (1020\xi^4 + 713\xi^3 + 158\xi^2 + 745\xi + 515, 891\xi^4 + 557\xi^3 + 135\xi^2 + 976\xi + 62) \in \mathcal{T}_5.$$
The compressed representation of $P$ is $\mathcal{R}(P) = (e_1,e_2,e_3,e_4) = (310,887,19,660).$
The decompressing equation is
$$g_5(e_1,e_2,e_3,e_4,t) = 62t^8 +502t^7 + 388t^6 + 294t^5 + 2t^4 + 466t^3 + 723t^2 + 55t + 388= 0,$$
which has solutions $e_5= 428, e'_5= 835, e''_5=550 \in \mathbb{F}_q$. By solving the equation
$$y^5 - e_1y^4 + e_2y^3 -e_3y^2 + e_4y -e_5=y^5 + 310y^4 + 887y^3 + 19y^2 + 660y + 593=0$$
we recover the $y$-coordinate of $P$ and all its Frobenius conjugates.
By solving the equation
$$y^5 - e_1y^4 + e_2y^3 -e_3y^2 + e_4y -e'_5=y^5 + 310y^4 + 887y^3 + 19y^2 + 660y + 186=0$$
we find roots in $\FF_{q^5}$, which do not correspond to points of trace zero.
By solving the equation
$$y^5 - e_1y^4 + e_2y^3 -e_3y^2 + e_4y -e''_5=y^5 + 310y^4 + 887y^3 + 19y^2 + 660y + 471=0$$
we find $Q\in\mathcal{T}_5$ which is not a Frobenius conjugate of $P$.
Hence in this case $$\mathcal{R}^{-1}(\mathcal{R}(P))=\{\pm P,\ldots,\pm\varphi^4(P),\pm Q,\ldots,\pm\varphi^4(Q)\}.$$
\end{example}

Denote by $\mathcal{T}_5/\sim$ the quotient of $\mathcal{T}_5$ by the equivalence relation that identifies opposite points and Frobenius conjugates. The representation (\ref{repr_sem}) induces a representation 
$$\mathcal{R}':\mathcal{T}_5/\sim \ \longrightarrow \ \FF_q^4.$$
In the previous example we show that $\mathcal{R}'$ is not injective. Nevertheless, an easy heuristic argument shows that a generic $(e_1,\ldots,e_4)\in\im\mathcal{R}'$ has exactly one inverse image.
In order to support the heuristics, we tested $15'000$ random points in the trace zero subgroup $\mathcal{T}_5$ of $15$  Edwards curves. The groups had prime cardinality and bit-length $192, 224,$ and $256$. For any random point $P$ we computed the cardinality of $\mathcal{R'}^{-1}(\mathcal{R'}(P))$, and found that it is $1$ for about $91\%$ of the points, $2$ for about $8.5\%$ of the points, and $3$ for about $0.5\%$ of the points. We also found a few points for which $|\mathcal{R'}^{-1}(\mathcal{R'}(P))|=4$, but the percentage was less than $0.02\%$. Finally, we did not find any points for which $4<|\mathcal{R'}^{-1}(\mathcal{R'}(P))|\leq 8$.

In order to test the efficiency of the compression and decompression algorithms for $n=5$, we have implemented them in Magma \cite{magma}.
We consider primes $q_1$, $q_2$, and $q_3$ of bit-length $48$, $56$, and $64$, respectively. We choose primes such that $5|q_i-1$ for all $i$.
For each $q_i$ we consider five pairs of birationally equivalent curves $(E,E_{a,d})$ defined over $\mathbb{F}_{q_i}$, such that the order of $\mathcal{T}_5$ is prime of bit-length $192$, $224$, and $256$,  respectively.
The following table contains the average times for compression and decompression in milliseconds. Each average is computed on a set of 20'000 randomly chosen points on each of the five curves.

\bigskip
\noindent\textbf{Table 3}.\\
\newline
\begin{tabular}{|l|c|c|c|c|}
\hline
Bit-length of $|\mathcal{T}_5|$ & $192$ & $224$ & $256$   \\
\hline
Compression on $E$  & $0.057$ & $0.055$ & $0.060$  \\
\hline
Compression on $E_{a,d}$  & $0.049$ & $0.058$ & $0.053$ \\
\hline
Decompression on $E$  & $64.17$ & $104.31$ & $121.51$  \\
\hline
Decompression on $E_{a,d}$ & $63.66$ & $104.45$ & $121.42$\\
\hline
\end{tabular}\\

\bigskip
The following table contains the ratios between the average times for point compression and decompression on elliptic curves in short Weierstrass form and twisted Edwards curves. 

\bigskip
\noindent\textbf{Table 4}.\\
\newline
\begin{tabular}{|l|c|c|c|c|}
\hline
Bit-length of $|\mathcal{T}_5|$ & $192$ & $224$ & $256$ \\
\hline
Comp on $E$ / Comp on $E_{a,d}$ & $1.163$ & $0.948$ & $1.132$  \\
\hline
Dec on $E$ / Dec on $E_{a,d}$ & $1.008$ & $0.999$ & $1.001$ \\
\hline
\end{tabular}\\

\section{An optimal representation using rational functions}

Let $E_{a,d}$ be a twisted Edwards curve defined over $\FF_q$. 
In this section, we propose another optimal representation for the trace zero subgroup $\mathcal{T}_n\subset E_{a,d}(\FF_{q^n})$ using rational functions.

In~\cite{EM2} the authors propose to represent an element $P\in \mathcal{T}_n$ via the coefficients of the rational function which corresponds to the principal divisor $P+\varphi(P)+\ldots+\varphi^{n-1}(P)-n\OO$ on the elliptic curve. Optimality of the representation depends on the fact that the rational function associated to this divisor has a special form, and can therefore be represented using $n-1$ coefficients in $\FF_q$. If we consider a principal divisor of the form $P+\varphi(P)+\ldots+\varphi^{n-1}(P)-n\OO$ on the twisted Edwards curve $E_{a,d}$, there are several questions that need to be answered. E.g., the rational function associated to this divisor is not a polynomial in general, so one needs to overcome some difficulties in order to successfully carry out the same strategy. 

We start with some preliminaries results on rational functions on a twisted Edwards curve. If $h$ is a rational function on $E_{a,d}$, we denote by $\divv(h)$ the divisor of the homogeneous rational function associated to $h$ on the projective closure of $E_{a,d}$. Throughout the section we use $(u,v)$ for the coordinates of the point and $x,y$ for the variables of the rational functions, in order to avoid confusion.

\begin{lemma}\label{lemma1_ratfun}
Let  $c\in k$ such that $ad^{-1}=c^2$, where $k=\FF_q$ or $k=\FF_{q^2}$ depending on whether $ad^{-1}$ is a quadratic residue in $\FF_q$ or not. 
Let $R(x,y) \in k(x,y)$ be a rational function over $E_{a,d}$. Then $R$ can be written in the form
$$R(x,y) = (y-c)^{k_1}(y+c)^{k_2}\frac{r_1(y)+xr_2(y)}{r_3(y)},$$
modulo $E_{a,d}$, where $r_1, r_2, r_3\in k[y]$, $\gcd\{r_1,r_2,r_3\}=1$, $r_3(\pm c)\not = 0$, and $k_1$, $k_2 \leq 0$.
\end{lemma}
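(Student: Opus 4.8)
The plan is to exploit the fact that the defining equation lets one eliminate $x^2$: rewriting $ax^2+y^2=1+dx^2y^2$ gives
$$x^2=\frac{1-y^2}{a-dy^2}=:\rho(y).$$
Since $a,d\neq 0$, the denominator factors as $a-dy^2=-d(y-c)(y+c)$ and so vanishes exactly at $y=\pm c$; this already explains why these two values play a distinguished role. At the level of function fields this says $k(E_{a,d})=k(y)[x]/(x^2-\rho(y))$ is an extension of $k(y)$ with $k(y)$-basis $\{1,x\}$, so every rational function on $E_{a,d}$ is $k(y)$-linear in $1$ and $x$.

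First I would bring $R$ into a standard shape. Writing $R=N(x,y)/M(x,y)$ and repeatedly substituting $x^2=\rho(y)$, I reduce $N$ and $M$ modulo the curve to expressions linear in $x$; rationalizing the denominator by multiplying with its conjugate and again using $x^2=\rho(y)$ turns the denominator into a function of $y$ alone while keeping the numerator linear in $x$. Writing the resulting coefficients over a common denominator and clearing the single remaining factor $(a-dy^2)$ yields
$$R(x,y)=\frac{P(y)+xQ(y)}{S(y)},\qquad P,Q,S\in k[y].$$
All coefficients stay in $k$ because $\rho$ has coefficients in $\FF_q\subseteq k$, so no extension beyond $k$ is introduced by the reduction.

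Next I would isolate the behaviour at $y=\pm c$. Factor $S(y)=(y-c)^{m_1}(y+c)^{m_2}\,r_3(y)$ with $m_1,m_2\geq 0$ chosen maximal so that $r_3(\pm c)\neq 0$, and set $k_1=-m_1\leq 0$, $k_2=-m_2\leq 0$, $r_1=P$, $r_2=Q$, which already gives $R=(y-c)^{k_1}(y+c)^{k_2}\frac{r_1+xr_2}{r_3}$. To enforce $\gcd\{r_1,r_2,r_3\}=1$, divide $r_1,r_2,r_3$ by their common divisor $g=\gcd\{r_1,r_2,r_3\}$; this leaves the fraction $\frac{r_1+xr_2}{r_3}$ unchanged. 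The one point to verify is that the division does not reintroduce a zero at $\pm c$: since $g\mid r_3$ and $r_3(\pm c)\neq 0$, neither $(y-c)$ nor $(y+c)$ divides $g$, hence $g(\pm c)\neq 0$ and therefore $(r_3/g)(\pm c)\neq 0$.

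The genuinely delicate part is not any single computation but arranging the three normalizing conditions to hold at once: the exponents $k_1,k_2$ must absorb all of the $(y\pm c)$-content of the denominator so that $r_3(\pm c)\neq 0$, while the $\gcd$-reduction must neither spoil that non-vanishing nor alter $k_1,k_2$. The observations that $g$ is automatically coprime to $(y\pm c)$ and that dividing all three polynomials by $g$ preserves both the value of $R$ and the condition at $\pm c$ are exactly what make these requirements compatible, and once they are in place the asserted normal form follows.
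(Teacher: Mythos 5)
Your proposal is correct and follows essentially the same route as the paper: use $x^2=(1-y^2)/(a-dy^2)$ to make numerator and denominator linear in $x$, rationalize by the conjugate to push $x$ out of the denominator, then factor the powers of $y\mp c$ out of the denominator. You supply more detail than the paper does on the final normalization (checking that dividing by $\gcd\{r_1,r_2,r_3\}$ preserves $r_3(\pm c)\neq 0$), which is a welcome addition rather than a deviation.
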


\begin{proof}
Using the relation $x^2= \frac{(1-y^2)}{(a-dy^2)}$, we can write $R(x,y)$ in the form
$$R(x,y)=\frac{s_1(y)+xs_2(y)}{s_3(y)+xs_4(y)},$$
where $s_i(y) \in k[y]$ for $1\leq i \leq 4$.
Multiplying and dividing by $s_3(y)-xs_4(y)$, we obtain:
$$R(x,y)=\frac{t_1(y)+xt_2(y)}{t_3(y)},$$
where $t_i(y) \in k[y]$ for $1\leq i \leq 3$.
Simplifying the fraction and factoring $y-c$ and $y+c$ as much as possible from the denominator, we obtain the thesis.\end{proof}
 
\begin{lemma}\label{ratfunpoli}
In the setting of Lemma \ref{lemma1_ratfun}, assume that $R$ has poles at most at the points at infinity $\Omega_1$ and $\Omega_2$. Then
$$R(x,y) = (y-c)^{k_1} (y+c)^{k_2} (q_1(y)+xq_2(y)),$$
modulo $E_{a,d}$, where $q_1(y)$, $q_2(y) \in k[y]$, $q_i(\pm c)\neq 0$ for $i=1,2$, and $k_1,k_2 \leq 0$.
\end{lemma}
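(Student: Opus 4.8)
The plan is to start from the normal form provided by Lemma~\ref{lemma1_ratfun}, namely
$$R(x,y)=(y-c)^{k_1}(y+c)^{k_2}\frac{r_1(y)+xr_2(y)}{r_3(y)},$$
and to show that the extra hypothesis that $R$ has poles at most at $\Omega_1,\Omega_2$ forces $r_3$ to be a nonzero constant; once this is done, setting $q_i=r_i/r_3$ gives the desired shape and it only remains to remove the common $(y\mp c)$-factors of $q_1,q_2$. The geometric input I would use is that on $E_{a,d}$ the only points lying over $y=c$, $y=-c$ and $y=\infty$ are the points at infinity. Indeed, from $x^2=(1-y^2)/(a-dy^2)$ together with $a-dy^2=-d(y-c)(y+c)$, the value of $x$ becomes infinite exactly when $y=\pm c$ and stays finite for every other $y$; since $a\neq d$ we have $c\neq\pm1$, so this locus is disjoint from the affine zeros $\OO=(0,1)$, $\OO'=(0,-1)$ of $x$. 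In particular every $\beta\in\overline{k}\setminus\{\pm c\}$ occurs as the $y$-coordinate of an affine point, and the prefactor $(y-c)^{k_1}(y+c)^{k_2}$ with $k_1,k_2\le0$ has all of its zeros and poles at infinity.

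The key step is an involution argument. Write $\bar R(x,y)=R(-x,y)$; since $(x,y)\mapsto(-x,y)$ fixes $\Omega_1$ and $\Omega_2$, the function $\bar R$ also has poles at most at infinity, and $\bar R=(y-c)^{k_1}(y+c)^{k_2}(r_1-xr_2)/r_3$. I would then consider the two combinations
$$A:=\frac{R+\bar R}{2}=(y-c)^{k_1}(y+c)^{k_2}\frac{r_1(y)}{r_3(y)},\qquad B:=\frac{R-\bar R}{2x}=(y-c)^{k_1}(y+c)^{k_2}\frac{r_2(y)}{r_3(y)},$$
both of which lie in $k(y)$. The function $A$ is regular at every affine point of $E_{a,d}$, and since a rational function of $y$ alone that is regular at all affine points can have a finite pole only at a value of $y$ not realised as a $y$-coordinate, its finite poles are confined to $\{\pm c\}$; as $r_3(\pm c)\neq0$ this forces $r_3\mid r_1$. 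For $B$ the same reasoning applies away from the zeros of $x$, and at the two affine points $\OO,\OO'$ where $x$ vanishes one checks that $R-\bar R$ vanishes as well (because $\bar R=R$ there), so $B$ stays regular at $\OO,\OO'$ and hence at every affine point, giving $r_3\mid r_2$. Combining $r_3\mid r_1$ and $r_3\mid r_2$ with $\gcd\{r_1,r_2,r_3\}=1$ yields that $r_3$ is a nonzero constant.

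Setting $q_1=r_1/r_3$ and $q_2=r_2/r_3$ then produces $R=(y-c)^{k_1}(y+c)^{k_2}(q_1(y)+xq_2(y))$ with $q_1,q_2\in k[y]$, and it remains to arrange $q_i(\pm c)\neq0$ by pulling any common $(y-c)$- or $(y+c)$-factor of $q_1,q_2$ into the prefactor. I expect this last normalisation to be the main obstacle: one must verify that the exponents $k_1,k_2$ stay $\le0$ after the absorption, and this is governed by the local behaviour of $R$ at the \emph{ramified} points at infinity over $y=\pm c$. There $1/x$ is a uniformiser and $y\mp c$ vanishes to order two, so the contributions of $q_1(y)$ and of $xq_2(y)$ have orders of opposite parity and cannot cancel; tracking these orders via $\divv$ is what simultaneously shows that no further $(y\mp c)$-factor survives in $q_1,q_2$ and that $k_1,k_2\le0$ is preserved. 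The remaining verifications are routine divisor bookkeeping.
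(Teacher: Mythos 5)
Your proof is correct, but it rules out a nonconstant $r_3$ by a genuinely different mechanism than the paper. The paper argues pointwise: assuming $r_3(v)=0$ with $v$ the $y$-coordinate of an affine point $P=(u,v)$, regularity of $R$ at both $P$ and $-P$ gives $r_1(v)=ur_2(v)=0$, the condition $\gcd\{r_1,r_2,r_3\}=1$ then forces $r_2(v)\neq 0$ and $u=0$, and the contradiction comes from comparing orders of vanishing at $\OO$ or $\OO'$, where $y-v$ vanishes to order $2$ on $E_{a,d}$ (horizontal tangent) while $r_1+xr_2$ vanishes only to order $1$. Your symmetrization under $(x,y)\mapsto(-x,y)$ is the function-level version of the same linear elimination of $x$, but it packages the conclusion globally as $r_3\mid r_1$ and $r_3\mid r_2$, after which the gcd hypothesis kills $r_3$ in one stroke; the only local verification you need is that $R(x,y)-R(-x,y)$ vanishes at $\OO$ and $\OO'$, where $x$ is a uniformizer, and this neatly replaces the paper's tangent-line and intersection-multiplicity case analysis. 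Both arguments rest on the same geometric inputs (every $y$-value other than $\pm c$ is attained by an affine point, and negation preserves the set of points at infinity), so neither is more general, but yours is cleaner at the delicate points $\OO,\OO'$ and yields the divisibilities rather than a mere contradiction. You are also right to flag that $q_i(\pm c)\neq 0$ does not come for free from $q_i=r_i/r_3$, since Lemma \ref{lemma1_ratfun} controls only $r_3$ at $\pm c$; be aware, however, that the paper's own proof is silent on this normalization too, and in the one place the lemma is applied (the proof of Theorem \ref{Th_es_poli}) only the polynomiality of $q_1+xq_2$ and the later identification $k_1=k_2=0$ from the explicit divisor of $fH$ are used, so your order-parity bookkeeping at the places over $\Omega_1$ is a reasonable way to finish but is more than the application actually requires.
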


\begin{proof}
By Lemma \ref{lemma1_ratfun} we can write $$R(x,y) =(y-c)^{k_1}(y+c)^{k_2}\frac{r_1(y)+xr_2(y)}{r_3(y)}.$$ Since $(y-c)^{k_1}=0$ and $(y+c)^{k_2}=0$ have no affine zeroes on $E_{a,d}$, $R$ has poles at most at the points at infinity if and only if the order of vanishing of $r_3$ on $E_{a,d}$ at each affine point is less than or equal to the order of vanishing of $r_1+xr_2$ on $E_{a,d}$ at the same point. 

Let $P=(u,v)$ be a point such that $r_3(v)=0$. Write $r_3$ in the form $r_3(y) = (y-v)^mt_3(y),$ where $t_3(v)\neq 0$ and $m > 0$. 
The order of vanishing of $r_3$ on $E_{a,d}$ at $P$ is $m$ if $u \neq 0$, and $2m$ if $u=0$. 
In fact, the only points in which $E_{a,d}$ has a horizontal tangent line are $\OO$ and $\OO'$.
The same holds for the order of vanishing of $r_3$ at $-P$.
From $r_1(v)+ur_2(v)=r_1(v)-ur_2(v)=0$ we obtain that $r_1(v)=ur_2(v)=0$. Therefore, since  $\gcd\{r_1,r_2,r_3\}=1$, we have $r_2(v)\neq 0$ and $u=0$.  The order of vanishing of $r_1+xr_2$ on $E_{a,d}$ at $P$ is $1$, since $P$ is a smooth point and the tangent line at $P$ to the curve of equation $r_1(y)+xr_2(y)$ is not horizontal. But the order of vanishing of $r_3$ on $E_{a,d}$ at $P$ is bigger than $m$, which yields a contradiction.
\end{proof}

In the introduction of this section, we hinted at the difficulty that if $P\in \mathcal{T}_n$ is a point of trace zero on a twisted Edwards curve $E_{a,d}$, the rational function associated to the principal divisor $P+\varphi(P)+\ldots+\varphi^{n-1}(P)-n\OO$ is not in general a polynomial. Lemma~\ref{ratfunpoli} offers a solution to this problem: considering a modified principal divisor, whose associated rational function is a polynomial. 
 
\begin{theorem}\label{Th_es_poli}
Let $E_{a,d}$ be a twisted Edwards curve defined over $\FF_q$ and let $P\in \mathcal{T}_n\subset E_{a,d}(\FF_{q^n})$. Then there exists a polynomial $q_P(x,y)=q_1(y)+xq_2(y)\in\FF_q[x,y]$, with $q_1(y),q_2(y) \in \mathbb{F}_q[y]$, such that
\begin{enumerate}
\item $div(q_P)=P+\varphi(P)+\ldots+\varphi^{n-1}(P)+\OO'-2\Omega_1-(n-1)\Omega_2.$
\item $\max\{\deg(q_1),\deg(q_2)\} =\frac{n-1}{2}.$
\item $q_1(y) = (1+y)\hat{q_1}(y),$ where $\hat{q_1}\in\mathbb{F}_q[y]$ and $\deg(\hat{q_1})\leq\frac{n-3}{2}$.
\item$q_2$ is not the zero polynomial.
\end{enumerate}
\end{theorem}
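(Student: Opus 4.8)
The plan is to realize $q_P$ as the function attached to an explicit principal divisor, obtained from the trace-zero relation by trading the pole of order $n$ at $\OO$ for poles at the two points at infinity. I would work throughout on the smooth model (normalization) $\widetilde{E}$ of $E_{a,d}$, on which the twisted Edwards addition is the ordinary elliptic-curve group law with neutral element $\OO$, and only at the end push divisors forward to the plane model to match the statement. The decisive preliminary observation, to be justified by a local computation at the nodes, is that each singular point at infinity splits into two branches on $\widetilde{E}$: over $\Omega_1$ the branches $\Omega_1^{+},\Omega_1^{-}$ are the points where $y\to c$ and $y\to -c$ (with $x\to\infty$), where $c^2=ad^{-1}$ as in Lemma~\ref{lemma1_ratfun}, while over $\Omega_2$ the branches $\Omega_2^{+},\Omega_2^{-}$ are the points where $y\to\infty$ and $x\to\pm d^{-1/2}$.

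First I would pin down the arithmetic of these special points. Reading off orders from the local parametrizations at the nodes gives $\divv(x)=\OO+\OO'-\Omega_1^{+}-\Omega_1^{-}$, $\divv(y-c)=2\Omega_1^{+}-\Omega_2^{+}-\Omega_2^{-}$, and $\divv(y+c)=2\Omega_1^{-}-\Omega_2^{+}-\Omega_2^{-}$ on $\widetilde{E}$. Since the negation map is $\sigma\colon(x,y)\mapsto(-x,y)$, and $\sigma$ fixes each branch of $\Omega_1$ but interchanges the two branches of $\Omega_2$, the four points $\OO,\OO',\Omega_1^{+},\Omega_1^{-}$ are exactly the $2$-torsion, with $\Omega_1^{+}\oplus\Omega_1^{-}=\OO'$ (from $\divv(x)$), while $\Omega_2^{+}\oplus\Omega_2^{-}=\OO$.

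Next comes existence. Set $A=\Omega_1^{+}+\Omega_1^{-}+\tfrac{n-1}{2}(\Omega_2^{+}+\Omega_2^{-})$, so $\deg A=n+1$. Using $\bigoplus_{i=0}^{n-1}\varphi^i(P)=\OO$ together with the relations above, the group-law sum of the degree-zero divisor $\sum_{i=0}^{n-1}\varphi^i(P)+\OO'-A$ collapses to $\OO\oplus\OO'\ominus\bigl(\OO'\oplus[\tfrac{n-1}{2}]\OO\bigr)=\OO$, so this divisor is principal and defines $q_P$; as it is Frobenius-invariant, $q_P$ may be taken over $\FF_q$ after rescaling. Now $q_P$ has poles bounded by $A$, hence $q_P\in L(A)$, and Riemann--Roch gives $\dim L(A)=n+1$. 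I would then exhibit the basis $1,y,\dots,y^{(n-1)/2},x,xy,\dots,xy^{(n-1)/2}$ of $L(A)$: each of these lies in $L(A)$ by the order computations above, they are linearly independent because $[\FF_q(E_{a,d}):\FF_q(y)]=2$ forces $x\notin\FF_q(y)$, and there are exactly $n+1$ of them. Thus $q_P=q_1(y)+xq_2(y)$ with $\deg q_1,\deg q_2\le\tfrac{n-1}{2}$, and pushing $\divv(q_P)$ forward to the plane model (summing the two branches over each $\Omega_i$) yields exactly the divisor in (1).

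Finally I would extract (2)--(4). For (2), the order of the pole of $q_P$ at a branch of $\Omega_2$ is at most $\max\{\deg q_1,\deg q_2\}$ (there $x$ is regular and $y$ has a simple pole), and by (1) it equals $\tfrac{n-1}{2}$; since $\max\{\deg q_1,\deg q_2\}\le\tfrac{n-1}{2}$ as well, equality holds. For (3), $\OO'=(0,-1)$ is a zero of $q_P$ and $q_P(\OO')=q_1(-1)$, so $(1+y)\mid q_1$ and $\hat{q}_1=q_1/(1+y)$ has degree $\le\tfrac{n-3}{2}$. For (4), if $q_2=0$ then $q_P=q_1(y)$ has a $\sigma$-invariant divisor; since $A$ and $\OO'$ are $\sigma$-invariant, this forces $\{\varphi^i(P)\}=\{-\varphi^i(P)\}$, hence $-P=\varphi^{i_0}(P)$ for some $i_0$ and $\varphi^{2i_0}(P)=P$. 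As $n$ is an odd prime, $P$ is then $2$-torsion or $\FF_q$-rational, and $E_{a,d}[2]\cap\mathcal{T}_n=\{\OO\}$ (as in the proof of Proposition~\ref{exc}) forces $P=\OO$; so $q_2\neq0$ for $P\neq\OO$, the case $P=\OO$ being settled by noting the order of vanishing at $\OO$ is odd while $q_1(y)$ alone vanishes there to even order. The main obstacle I anticipate is precisely the local analysis at the two singular points at infinity---resolving each node into its branches, computing the orders of $x$, $y$, $y\pm c$, and deducing the $2$-torsion picture and the relation $\Omega_2^{+}\oplus\Omega_2^{-}=\OO$---since this is exactly where twisted Edwards curves depart from the short Weierstrass setting of \cite{EM2}, where there is a single smooth point at infinity.
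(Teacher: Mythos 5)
Your proof is correct, but it takes a genuinely different route from the paper's. The paper proves existence \emph{constructively}: it writes the function $f$ with $\divv(f)=\sum\varphi^i(P)-n\OO$ as a Miller-style product of conics $\phi_i$ through consecutive partial sums (via Proposition~\ref{conic}) divided by horizontal lines $h_i$ and powers of $x$, multiplies by $H=x(1-y)^{(n-1)/2}$, reduces modulo the curve, and invokes Lemma~\ref{ratfunpoli} to show the denominator divides the numerator; the upper bound in part~2 then comes from explicit degree bookkeeping ($\deg R_i\leq 2n-5$ minus the degree of the denominator), and the lower bound from the norm polynomial $R_P=(a-dy^2)q_1^2-(1-y^2)q_2^2$ of degree exactly $n+1$. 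You instead pass to the normalization, resolve the two nodes into branches, compute $\divv(x)$, $\divv(y\pm c)$ there, identify $E_{a,d}[2]=\{\OO,\OO',\Omega_1^{+},\Omega_1^{-}\}$ and the relations $\Omega_1^{+}\oplus\Omega_1^{-}=\OO'$, $\Omega_2^{+}\oplus\Omega_2^{-}=\OO$, and then get existence from Abel's theorem and the shape plus the degree bound simultaneously from the monomial basis of the Riemann--Roch space $L(A)$. Your approach is more conceptual and actually \emph{explains} why the correcting divisor $\OO'-2\Omega_1-(n-1)\Omega_2$ is the right one, and it obtains the exact pole order $\frac{n-1}{2}$ at each branch of $\Omega_2$ for free, which cleanly settles part~2; the price is that it is non-constructive, whereas the paper's proof doubles as the compression algorithm via~(\ref{qPMS}), and the paper still needs $R_P$ later for Proposition~\ref{prop_QP}. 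Two small remarks: your argument for part~4 is correct but roundabout --- the paper simply observes that if $q_2=0$ then $q_P=q_1(y)$ vanishes at $\OO'$ to even order (the tangent at $\OO'$ is horizontal), contradicting the coefficient $1$ of $\OO'$ in part~1; and your descent of $q_P$ to $\FF_q$ from the Galois-invariance of the divisor deserves an explicit word (Hilbert~90 on the constant cocycle $\varphi^*q_P/q_P$), which the paper avoids entirely by constructing $q_P$ from $\FF_q$-rational data.
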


\begin{proof} 
$1.$ The point $P=(u,v)$ has trace zero, hence $P+\varphi(P)+\ldots+\varphi^{n-1}(P)=\mathcal{O}.$
Then there exists a rational function $f$ on $E_{a,d}$ defined over $\FF_q$ such that
$$\divv(f)=P+\varphi(P)+\ldots+\varphi^{n-1}(P)-n\mathcal{O}.$$
The polynomial $H(x,y) = x(1-y)^{\frac{n-1}{2}} \in \mathbb{F}_q[x,y]$ corresponds to the divisor
$$\divv(H)=n\OO+\OO'-2\Omega_1-(n-1)\Omega_2.$$
Therefore
$$\divv(fH)=P+\varphi(P)+\ldots+\varphi^{n-1}(P)+\OO'-2\Omega_1-(n-1)\Omega_2.$$
By Lemma \ref{ratfunpoli}, we can write
$$fH=(y-c)^{k_1}(y+c)^{k_2}(q_1(y)+xq_2(y)),$$
where $q_1(y), q_2(y)$ are polynomials, $k_1, k_2 \leq 0$, and $ad^{-1}=c^2$.
We now prove that $k_1=k_2=0$ i.e. $fH=q_P$, from which we get part 1. 
For each $1\leq i \leq n$, let $P_i=\varphi^{i-1}(P)$. 
For each $1\leq i \leq n-2$, let $\phi_i$ be the conic with
$$\divv(\phi_i) = (P_1+\ldots+ P_{i-1}+ P_{i})+P_{i+1}+(-(P_1+\ldots+ P_i+ P_{i+1}))+\mathcal{O}'-2\Omega_1-2\Omega_2.$$ Notice that $\phi_i$ exists by~\cite[Theorem 1 and Theorem 2]{GeomInt}, and it is unique up to multiplication by a constant.
For each $1\leq i \leq n-3$, let $h_i$ be the horizontal line through the point $P_1+\dots+P_{i+1}\in E_{a,d}$.
Then
$$\divv(h_i) =(P_1+\ldots+P_{i+1})+(-(P_1+\ldots+P_{i+1}))-2\Omega_2.$$
Since
$\divv(x) = \mathcal{O}+\mathcal{O}'-2\Omega_1$, $\divv(1-y) = 2\mathcal{O}-2\Omega_2$, and $f$ has no zeroes or poles at infinity, 
we have the equality of rational functions: 
$$f = \frac{\phi_1\phi_2\cdots\phi_{n-2}}{x^{n-2}(1-y) h_1 h_2\cdots h_{n-3}},$$
up to multiplication by a nonzero constant.
Therefore
\begin{equation}\label{qP}
fH = \frac{\phi_1\phi_2\cdots\phi_{n-2}(1-y)^{\frac{n-3}{2}}}{h_1 h_2\cdots h_{n-3} x^{n-3}} =\frac{(a-dy^2)^{\frac{n-3}{2}}}{h(y)(1+y)^{\frac{n-3}{2}} } \prod_{i=1}^{n-2}{\phi_i}
\end{equation}
modulo the curve equation, where $h(y)=\prod_{i=1}^{n-3}{h_i}$ and $\deg(h)=n-3$.
For each $1\leq i \leq n-2$, $\phi_i$ is of the form
$\phi_i=B_i(y) x+A_i(y),$
where $B_i(y)$ and $A_i(y)$ are polynomials in $y$ of degree at most $1$, by~\cite[Theorem~1]{GeomInt}. Hence
$$\prod_{i=1}^{n-2}{\phi_i}=H_{n-2}(y) x^{n-2}+H_{n-3}(y) x^{n-3} + \ldots  +H_1(y) x + H_0(y),$$
where each $H_i(y)$ is a polynomial in $y$ of degree at most $n-2$. Then, reducing modulo $E_{a,d}$ we obtain
$$(a-dy^2)^{\frac{n-3}{2}} \prod_{i=1}^{n-2}{\phi_i}(x,y)=R_1(y)+xR_2(y),$$
where each $R_i(y)$ is a polynomial of $\deg(R_i)\leq \max\{\deg(H_j)\} + n-3\leq 2n-5.$
The denominator of (\ref{qP}) divides both $R_1(y)$ and $R_2(y)$ by Lemma~\ref{ratfunpoli}.
Hence, letting
$R_i(y)=q_i(y)h(y)(1+y)^{\frac{n-3}{2}}$
for $i =1,2$, we have that $fH=q_P$. 

\medskip\noindent
$2.$ Using the notation of part $1$, we have
\begin{equation}\label{dis1}\deg(q_i) = \deg(R_i) - \deg(1+y)^{\frac{n-3}{2}}-\deg(h) \leq 2n-5 - \frac{(n-3)}{2}-(n-3)=\frac{n-1}{2}\end{equation}
for $i=1,2$. Moreover, by part $1$
$$\divv(q_{-P})=(-P)+\ldots+\varphi^{n-1}(-P)+\OO'-2\Omega_1-(n-1)\Omega_2,$$
and modulo $E_{a,d}$ $$q_P(x,y)q_{-P}(x,y)=q_1^2(y)-\frac{1-y^2}{a-dy^2}\, q_2^2(y).$$
Since $\divv(a-dy^2)=4\Omega_1-4\Omega_2$, the polynomial
$R_P(y)=(a-dy^2) q_1^2(y) - (1-y^2) q_2^2(y)$ has
$$div(R_P)= (\pm P) + (\pm \varphi(P)) + \dots + (\pm \varphi^{n-1}(P)) + 2\mathcal{O}'-2(n+1)\Omega_2.$$
Hence $(1+y)\prod_{i=0}^{n-1}v^{q^i}|R_P(y)$, therefore
\begin{equation}\label{dis2}n+1 \leq \deg(R_P(y)) \leq 2+2\max\{\deg(q_1),\deg(q_2)\}\end{equation}
and part $2$ follows directly from (\ref{dis1}) and (\ref{dis2}). We have also obtained that $R_P$ is a polynomial of degree exactly $n+1$ with coefficients in $\mathbb{F}_q$ and roots $-1$, $v^{q^i}$, for $0\leq i \leq n-1$: we will need this result in the sequel. 

\medskip\noindent
$3.$ Since $q_P$ vanishes at $\mathcal{O}'=(0,-1)$, then $q_1$ is of the form
$$q_1(y) = (1+y)\hat{q_1}(y),$$ where $\hat{q_1}\in\mathbb{F}_q[y]$ and $\deg(\hat{q_1})\leq\frac{n-3}{2}$.

\medskip\noindent
$4.$ If $q_2$ was the zero polynomial, then $q_P=q_1(y)$ would vanish on $\mathcal{O}'$ with multiplicity at least $2$, contradicting part $1$.

\end{proof}

\bigskip\noindent{\bf Computation of $q_P$.} 
In the proof of the previous theorem we have seen that one can compute the polynomial $q_P$ as
\begin{equation}\label{qPMS}q_P = \frac{\phi_1\phi_2\cdots\phi_{n-2}(1-y)^{\frac{n-3}{2}}}{h_1 h_2\cdots h_{n-3} x^{n-3}},\end{equation}
where 
for each $1\leq i \leq n$, $P_i=\varphi^{i-1}(P)$, 
for each $1\leq i \leq n-2$, $\phi_i$ is the conic through $(P_1+\ldots+ P_{i-1}+ P_{i})$, $P_{i+1}$, $\mathcal{O}'$, $2\Omega_1$ and $2\Omega_2$, 
for each $1\leq i \leq n-3$, $h_i$ is the horizontal line through $P_1+\dots+P_{i+1}\in E_{a,d}$.
Notice that we can easily calculate $\phi_i$ for each $i$, employing the formulas given in ~\cite[Theorem 1 and Theorem 2]{GeomInt}.

We now discuss how to use the polynomial $q_P$ to represent $P$ via $(n-1)$ elements of $\mathbb{F}_q$ plus a bit.
As a consequence of Theorem \ref{Th_es_poli}, $q_P$ has the form
$$q_P(x,y)= (1+y)\left(a_{\frac{n-3}{2}}y^{\frac{n-1}{2}}+\dots + a_1y + a_0\right)+x\left(b_{\frac{n-1}{2}}
y^{\frac{n-1}{2}}+\dots + b_1y + b_0\right),$$
where $a_i, b_j \in \mathbb{F}_q$ for all $i, j$, and $b_{\frac{n-1}{2}}\in\{0,1\}$.
We have therefore obtained an optimal representation for the elements of $\mathcal{T}_n$:
\begin{equation}\label{CompressP}
\begin{array}{rccl}
\mathcal{R}: & \mathcal{T}_n & \longrightarrow & \FF_q^{n-1}\times\FF_2 \\
& P & \longmapsto & \left(a_0,\dots,a_{\frac{n-3}{2}},b_0,\dots,b_{\frac{n-1}{2}}\right).
\end{array}
\end{equation}

We now give the complete algorithm for point compression.\\

\hrule
\begin{algorithm}[{\bf Compression}]\textbf{ }\\
\hrule
\textbf{ }\\
\newline
\textbf{Input} : $P \in \mathcal{T}_n$\\
\textbf{Output} : $\mathcal{R}(P) \in \mathbb{F}_q^{n-1}\times\mathbb{F}_2$\\
\newline
\begin{footnotesize}1: \end{footnotesize} Compute $q_P(x,y)=q_1(y)+xq_2(y)$ using (\ref{qP}) and reducing modulo $E_{a,d}$.\\
\begin{footnotesize}2: \end{footnotesize} Compute $\hat{q_1}(y) = q_1(y)/(1+y)=a_{\frac{n-3}{2}}y^{\frac{n-1}{2}}+\dots + a_1y+a_0$.\\ 
\begin{footnotesize}3: \end{footnotesize} $q_2(y)=b_{\frac{n-1}{2}}y^{\frac{n-1}{2}}+\dots+b_1y+b_0$.\\
\begin{footnotesize}4: \end{footnotesize} $\mathcal{R}(P)\leftarrow (a_0,\dots,a_{\frac{n-3}{2}},b_0,\dots,b_{\frac{n-1}{2}})$.\\
\begin{footnotesize}5: \end{footnotesize} {\bf return} $\mathcal{R}(P)$.\\
\hrule
\end{algorithm}
Correctness of the compression algorithm is a direct consequence of  our previous results.\\
%%
% Esempio
%%
%\begin{example}\label{escompr}
%{We have
%$\hat{q_1}(y)=450y+266$,
%so we obtain
%$Compress(P)=(266,450,440,944,1).$
%}
%\end{example}
 
Given an $n$-tuple $(\alpha_1,\dots,\alpha_{n-1}, b) \in \mathbb{F}_q^{n-1}\times\mathbb{F}_2$ such that $(\alpha_1,\dots,\alpha_{n-1}, b)=\mathcal{R}(P)$ for some $P\in \mathcal{T}_n$, we want to compute the decompression $\mathcal{R}^{-1}(\alpha_1,\dots,\alpha_{n-1}, b)$. We start with some preliminary results. The next lemma guarantees that the $x$-coordinate of $P$ can be computed from its \textit{y}-coordinate and the polynomial $q_P$. 
 
\begin{lemma}\label{xcoord}
Let $P=(u,v)\in \mathcal{T}_n$, let $q_P(x,y)=q_1(y)+xq_2(y)\in\FF_q[x,y]$ be the polynomial
with $div(q_P)=P+\varphi(P)+\ldots+\varphi^{n-1}(P)+\OO'-2\Omega_1-(n-1)\Omega_2.$ Then:
$q_2(v)=0$ if and only if  $P=\mathcal{O}$.
\end{lemma}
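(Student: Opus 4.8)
The statement is an equivalence, and I would prove the two implications separately, the forward one ($q_2(v)=0\Rightarrow P=\OO$) being the substantial direction. For the reverse implication, suppose $P=\OO=(0,1)$, so that $v=1$. Since $\OO$ is $\FF_q$-rational, all its Frobenius conjugates coincide with $\OO$, and the divisor in the hypothesis collapses to $\divv(q_P)=n\OO+\OO'-2\Omega_1-(n-1)\Omega_2$. This is exactly the divisor of the polynomial $H(x,y)=x(1-y)^{\frac{n-1}{2}}$ computed in the proof of Theorem~\ref{Th_es_poli}. As two functions with the same divisor differ by a nonzero constant, $q_P=c\,x(1-y)^{\frac{n-1}{2}}$, whence $q_1=0$ and $q_2(y)=c(1-y)^{\frac{n-1}{2}}$, giving $q_2(v)=q_2(1)=0$.

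For the forward implication, assume $q_2(v)=0$. The first step is to force $v\in\FF_q$. The plan is to use the degree bound from Theorem~\ref{Th_es_poli}: $q_2\in\FF_q[y]$ is nonzero by part 4 and has degree at most $\frac{n-1}{2}$. If $v\notin\FF_q$, then since $v\in\FF_{q^n}$ and $n$ is prime, the minimal polynomial of $v$ over $\FF_q$ has degree $n$ and divides $q_2$; as $\frac{n-1}{2}<n$, this forces $q_2=0$, a contradiction. Hence $v\in\FF_q$. From the curve equation $u^2=\frac{1-v^2}{a-dv^2}\in\FF_q$ (the denominator is nonzero, otherwise $v=\pm1$ and $a=d$), and since $u\in\FF_{q^n}$ with $n$ odd, we get $u\in\FF_{q^2}\cap\FF_{q^n}=\FF_q$. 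Thus $P\in E_{a,d}(\FF_q)$.

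It then remains to conclude $P=\OO$. I would argue by contradiction: suppose $P\neq\OO$. Then $P\neq\OO,\OO'$, so $u\neq0$ and $-P=(-u,v)$ is distinct from $P$ and $\OO'$. Since $P$ lies in the support of $\divv(q_P)$, we have $q_P(u,v)=q_1(v)+uq_2(v)=0$, and $q_2(v)=0$ then yields $q_1(v)=0$. Because $v\in\FF_q$, the factor $(y-v)$ belongs to $\FF_q[y]$ and divides both $q_1$ and $q_2$, so $q_P=(y-v)\tilde{q}_P$ with $\tilde{q}_P\in\FF_q[x,y]$. Now, as $P\in E_{a,d}(\FF_q)$, the divisor simplifies to $\divv(q_P)=nP+\OO'-2\Omega_1-(n-1)\Omega_2$, which has coefficient $0$ at $-P$. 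On the other hand $y-v$ vanishes at the smooth affine point $-P$ while $\tilde{q}_P$ is regular there, so $\divv(q_P)$ must have strictly positive coefficient at $-P$, a contradiction. Therefore $P=\OO$.

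The main obstacle is the passage, in the forward direction, from the analytic vanishing $q_2(v)=0$ to the arithmetic conclusion $v\in\FF_q$, and then the careful divisor bookkeeping at $-P$ that eliminates every $P\neq\OO$. The two facts that make these steps go through are the degree bound $\deg q_2\leq\frac{n-1}{2}<n$ (which keeps a degree-$n$ minimal polynomial from dividing $q_2$) and the factorization $(y-v)\mid q_P$ over $\FF_q$ (which produces the forbidden zero at the conjugate point $-P$).
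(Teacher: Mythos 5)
Your proof is correct, and the substantial (forward) direction follows a genuinely different route from the paper's. Both arguments start from the same observation: $q_P(u,v)=0$ together with $q_2(v)=0$ gives $q_1(v)=0$, hence $q_P(-u,v)=0$. The paper then uses only part 1 of Theorem~\ref{Th_es_poli}: since the affine zeros of $q_P$ are exactly $\OO'$ and the $\varphi^i(P)$, and $\OO'\notin\mathcal{T}_n$, one gets $-P=\varphi^i(P)$ for some $i$; the case $i=0$ gives $P=\OO$ at once, and the case $i\neq 0$ yields $v\in\FF_{q^{\gcd(i,n)}}=\FF_q$ and $u\in\FF_{q^{\gcd(2i,n)}}=\FF_q$, so that $-P=\varphi^i(P)=P$ and again $P=\OO$. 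You instead establish the rationality of $P$ \emph{before} invoking the zero at $-P$, by appealing to parts 2 and 4 of the theorem: $q_2$ is a nonzero polynomial in $\FF_q[y]$ of degree at most $\frac{n-1}{2}<n$, so it cannot be divisible by the degree-$n$ minimal polynomial of a $v\notin\FF_q$, and the curve equation then forces $u\in\FF_q$. Only then do you play the zero at $-P$ against the divisor $nP+\OO'-2\Omega_1-(n-1)\Omega_2$ to exclude $P\neq\OO$; your detour through the factorization $(y-v)\mid q_P$ is harmless but unnecessary, since $q_P(-u,v)=0$ already places $-P$ among the affine zeros, which suffices for the contradiction. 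The trade-off is that the paper's argument leans on the divisor identity alone and is a little shorter, while yours additionally uses the degree bound and the non-vanishing of $q_2$ but cleanly isolates the reusable fact that $q_2(v)=0$ by itself forces $P\in E_{a,d}(\FF_q)$. The reverse implication is handled identically in both proofs.
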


\begin{proof}
If $q_2(v)=0$, then $q_1(v)=0$, hence $q_P(-u,v)=0$.
Since the affine points of the curve on which $q_P$ vanishes are exactly $\mathcal{O}'$ and $\varphi^i(P)$ for $0\leq i  \leq n-1$ by Theorem \ref{Th_es_poli} and $\mathcal{O}' \not \in \mathcal{T}_n$, then $-P=\varphi^i(P)$ for some $i$. 
If $i=0$, we have $-P=P$, hence $P=\mathcal{O}$. If $i\not=0$, then $(-u,v)=(u^{q^i},v^{q^i})$ for some $i \in \{1,\dots,n-1\}$. Then $v \in \mathbb{F}_{q^i}\cap\mathbb{F}_{q^n}=\mathbb{F}_q$ and $u^{q^{2i}}=u \in \mathbb{F}_{q^{2i}}\cap\mathbb{F}_{q^n}=\mathbb{F}_q$. Hence $P \in E_{a,d}(\mathbb{F}_q)$ and $-P=\varphi^i(P)=P$, from which $P=\mathcal{O}$.

Conversely, if $P=\mathcal{O}$ then $q_P(x,y)=x(1-y)^{\frac{n-1}{2}}$ and $q_2(1)=0$.
\end{proof}

Given $q_P(x,y)$, we can compute a polynomial $Q_P(y)$ whose roots are exactly the Frobenius conjugates of the $y$-coordinate of $P$. This will be used in our decompression algorithm.

\begin{proposition}\label{prop_QP}
Let $P=(u,v)\in \mathcal{T}_n$, let $q_P(x,y)=(1+y)\hat{q_1}(y)+xq_2(y)\in\FF_q[x,y]$ be the polynomial
with $div(q_P)=P+\varphi(P)+\ldots+\varphi^{n-1}(P)+\OO'-2\Omega_1-(n-1)\Omega_2.$
Define $$Q_P(y)=(a-dy^2)(1+y)\hat{q_1}^2(y) +(y-1)q_2^2(y).$$
Then $Q_P(y)\in\FF_q[y]$, $\deg Q_P=n$, and its roots are $v,v^q,\ldots,v^{q^{n-1}}$.
\end{proposition}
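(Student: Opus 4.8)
The plan is to obtain Proposition~\ref{prop_QP} as a short consequence of the analysis of the polynomial $R_P$ carried out in the proof of Theorem~\ref{Th_es_poli}, part~2. Recall that there one sets $R_P(y)=(a-dy^2)q_1^2(y)-(1-y^2)q_2^2(y)$ and shows that $R_P\in\FF_q[y]$, that $\deg R_P=n+1$, and that the roots of $R_P$ are exactly $-1$ together with $v,v^q,\dots,v^{q^{n-1}}$. The entire statement will follow once I establish the algebraic identity $R_P(y)=(1+y)\,Q_P(y)$, so the heart of the argument is this single factorization.

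First I would record the identity. Since $q_P\in\FF_q[x,y]$ by Theorem~\ref{Th_es_poli}, its components lie in $\FF_q[y]$, and by part~3 we may write $q_1=(1+y)\hat{q_1}$ with $\hat{q_1}\in\FF_q[y]$; hence $Q_P\in\FF_q[y]$ is immediate from its definition. Substituting $q_1^2=(1+y)^2\hat{q_1}^2$ and $1-y^2=(1-y)(1+y)$ into $R_P$ and factoring out $(1+y)$ gives $R_P=(1+y)\big[(a-dy^2)(1+y)\hat{q_1}^2+(y-1)q_2^2\big]$, and the bracketed polynomial is exactly $Q_P$. This proves $R_P=(1+y)Q_P$.

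From here the three claims drop out. The coefficient field is $\FF_q$, as noted. Since $\FF_q[y]$ is a domain, $\deg Q_P=\deg R_P-\deg(1+y)=(n+1)-1=n$. Finally, because $\deg\big[(1+y)\prod_{i=0}^{n-1}(y-v^{q^i})\big]=n+1=\deg R_P$ and this product divides $R_P$ by Theorem~\ref{Th_es_poli}, we actually have $R_P=c\,(1+y)\prod_{i=0}^{n-1}(y-v^{q^i})$ for some nonzero $c\in\FF_q$; cancelling the common factor $(1+y)$ yields $Q_P=c\prod_{i=0}^{n-1}(y-v^{q^i})$, so the roots of $Q_P$, counted with multiplicity, are precisely $v,v^q,\dots,v^{q^{n-1}}$.

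The one point that genuinely uses $P\in\mathcal{T}_n$, and which I expect to be the only thing requiring care, is that the factor $(1+y)$ cancelled above really corresponds to the root $-1$ and not to one of the conjugates, i.e. that $-1\notin\{v,v^q,\dots,v^{q^{n-1}}\}$. This holds because $-1\in\FF_q$ is fixed by $\varphi$, so $v^{q^i}=-1$ would force $v=-1$, and then the curve equation $au^2+1=1+du^2$ together with $a\neq d$ would give $u=0$, i.e. $P=\OO'$, contradicting $\OO'\notin\mathcal{T}_n$. With this observation the factorization of $R_P$ separates cleanly as above, and no further computation is needed.
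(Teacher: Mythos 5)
Your argument is correct and is essentially identical to the paper's own proof, which likewise factors $R_P(y)=(a-dy^2)q_1^2(y)-(1-y^2)q_2^2(y)$ as $(1+y)Q_P(y)$ and then invokes the facts established in part 2 of the proof of Theorem~\ref{Th_es_poli} (that $R_P\in\FF_q[y]$ has degree exactly $n+1$ with roots $-1$ and $v^{q^i}$ for $0\leq i\leq n-1$). Your closing verification that $-1\notin\{v,v^q,\dots,v^{q^{n-1}}\}$, so that cancelling the factor $(1+y)$ does not remove one of the conjugate roots, is a detail the paper leaves implicit and is a worthwhile addition.
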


\begin{proof}
Let $R_P=(a-dy^2) q_1^2(y) - (1-y^2) q_2^2(y)=(1+y)[(a-dy^2)\hat{q_1}(y) - (1-y) q_2^2(y)]$.
Then $Q_P(y) = (1+y)^{-1}\cdot R_P(y)$, and the claim follows by Theorem~\ref{Th_es_poli}. 
\end{proof}
 
We are now ready to give the decompression algorithm.
\newline
\hrule
\begin{algorithm}[{\bf Decompression}]\textbf{ }\\
\hrule
\textbf{ }\\
\newline
\textbf{Input} : $(\alpha_1,\dots,\alpha_{n-1},b) \in \mathbb{F}_q^{n-1}\times\mathbb{F}_2$\\
\textbf{Output} : $P=(u,v) \in \mathcal{T}_n$ with $\mathcal{R}(P)=(\alpha_1,\dots,\alpha_{n-1},b)$\\
\newline
\begin{footnotesize}1: \end{footnotesize} $\hat{q}_1(y) \leftarrow \alpha_{\frac{n-1}{2}}y^{\frac{n-3}{2}}+\dots+\alpha_2y+\alpha_1.$\\
\begin{footnotesize}2: \end{footnotesize} $q_2(y) \leftarrow by^{\frac{n-1}{2}}+\alpha_{n-1}y^{\frac{n-3}{2}}+\dots+\alpha_{\frac{n+3}{2}}y+\alpha_{\frac{n+1}{2}}.$\\
\begin{footnotesize}3: \end{footnotesize} $Q_P(y) \leftarrow (a-dy^2)\cdot(1+y)\cdot \hat{q}_1^2(y)+(y-1)\cdot q_2^2(y)$.\\
\begin{footnotesize}4: \end{footnotesize} $v \leftarrow$ one root of $Q_P(y)$.\\
\begin{footnotesize}5: \end{footnotesize} \textbf{if } $v=1$ \textbf{ then } $u \leftarrow 0$ \textbf{ else } $u \leftarrow -\frac{\hat{q}_1(v)(v+1)}{q_2(v)}$ \textbf{ endif  }\\
\begin{footnotesize}6: \end{footnotesize} {\bf return} $(u,v)$. \\
\hrule
\end{algorithm}

\begin{remark}
Let $P\in\mathcal{T}_n$ be a point with $\mathcal{R}(P)=(\alpha_1,\dots,\alpha_{n-1},b)$. By Theorem~\ref{Th_es_poli} the Frobenius conjugates of $P$ are the only other points of $\mathcal{T}_n$ with the same representation. Correctness of the first four lines of the algorithm follows from Proposition \ref{prop_QP} and correctness of line $5$ follows from Lemma \ref{xcoord}. Hence the given algorithm correctly recovers the point $P$, up to Frobenius conjugates.
\end{remark}

%\begin{example}
%{We obtain the decompressing polynomial:
%$$Q_P(y) = 212y^5 + 732y^4 + 698y^3 + 656y^2 + 777y + 697,$$
%whose roots are:\\
%\newline
%$350\xi^4 + 605\xi^3 + 563\xi^2 + 667\xi + 70$,\\
%$364\xi^4 + 59\xi^3 + 571\xi^2 + 888\xi + 705$,\\
%$624\xi^4 + 156\xi^3 + 459\xi^2 + 540\xi + 100$\\
%$733\xi^4 + 239\xi^3 + 437\xi^2 + 979\xi + 887$\\
%$992\xi^4 + 983\xi^3 + 12\xi^2 + 1010\xi + 893$,\\
%\newline
%which are exactly the $y$-coordinates of $P$ and its Frobenius conjugates.\\
%\newline
%Notice that, applying Algorithm \ref{algdecsem} in the same case, we get (up to sign) not only the Frobenius class of $P$ (see Example \ref{es_sem_ind}), but also the class of another trace zero point.}
%\end{example}

\subsection{Explicit equations, complexity, and timings for $n=3$}

In this subsection we give explicit equations and perform some computations for $n=3$. We estimate the number of operations needed for the compression and decompression, and present some timings obtained with Magma. We also make comparisons with trace zero subgroups of elliptic curves in short Weierstrass form treated in \cite{EM2}. \\

{\bf Point Compression}. Let $P=(u,v) \in T_3$. By Theorem~\ref{Th_es_poli}, we may write
$$q_P(x,y)=\hat{q_1}(y)(1+y)+xq_2(y)=a_0(1+y)+x(b_1y+b_0),$$
where $a_0 \mbox{, } b_0 \in \mathbb{F}_q \mbox{, } b_1 \in \{0,1\}$.

If $P\not \in E_{a,d}(\mathbb{F}_q)$, let $t=\frac{v+1}{u}$. Notice that $u\not=0$, since $u=0$ implies $P=\mathcal{O}$, hence $P\in E_{a,d}(\mathbb{F}_q)$. \\
\newline
\medskip\noindent
{\bf 1.}  If $t^q-t\not=0$, by Theorem $1$ of \cite{GeomInt}
$$\mathcal{R}(P)=(a_0,b_0,b_1)=\left(-\frac{v^q-v}{t^q-t},-a_0t-v,1\right).$$
Computing $t$ from $u$ and $v$ takes $1$M+$1$I in $\mathbb{F}_{q^3}$. Once we have $t$, the situation is analogous to the case of elliptic curves in short Weierstrass form. Hence we refer to \cite[Section 5.1]{EM2} for a detailed discussion of how to efficiently compute $\mathcal{R}(P)$. In particular, it is shown that one can compute $a_0$ and $b_0$ with $2$S+$6$M +$1$I in $\mathbb{F}_q$.
Summarizing, point compression in this case takes $1$M+$1$I in $\mathbb{F}_{q^3}$ and $2$S+$6$M +$1$I in $\mathbb{F}_q$. Due to the calculation of $t$, it is more expensive than that for elliptic curves in short Weierstrass form. \\
\newline
\medskip\noindent
{\bf 2.} If $t^q-t=0$, then $q_P$ is the line passing through $P$ and $\mathcal{O}'$ by \cite[Theorem 1]{GeomInt}. Hence
\begin{equation}\label{ComprP3_1b}
\mathcal{R}(P)=\left(-t^{-1},1,0\right).
\end{equation}
Since $\mathcal{O}'\not \in {T}_3$, then $t\not=0$.
In this case point compression requires only $1$M + $1$I in $\mathbb{F}_{q^3}$.\\

If $P \in E_{a,d}(\mathbb{F}_q)$, then the computation takes place in $\FF_q$ instead of $\FF_{q^3}$, hence we expect the complexity to be lower. We carry on a precise operation count, as in the previous case.\\
\newline
\medskip\noindent
{\bf 3.} If $du^2v-1\not=0$, by \cite[Theorem 1]{GeomInt}
$$\mathcal{R}(P)=\left(\frac{u(1-v)}{du^2v-1},\frac{v-au^2}{du^2v-1},1\right).$$
Therefore, point compression takes $1$S+$4$M+$1$I in $\mathbb{F}_q$.\\
\newline
\medskip\noindent
{\bf 4.} If $du^2v-1=0$, then the situation is analogous to {\bf 2.} and $\mathcal{R}(P)$ is given by (\ref{ComprP3_1b}). Hence point compression requires $1$M + $1$I in $\mathbb{F}_{q}$.
\newline\medskip

Since {\bf 1.} is the generic case, the expected complexity of point compression is $1$M+$1$I in $\mathbb{F}_{q^3}$ and $2$S+$6$M +$1$I in $\mathbb{F}_q$.\\

{\bf Point Decompression}. Let $(\alpha_1,\alpha_2,b) \in \mathbb{F}_q^2\times \mathbb{F}_2$ and $P=(u,v)\in\mathcal{T}_3$ such that $\mathcal{R}(P)=(\alpha_1,\alpha_2,b)$. In order to recover $P$ from $\mathcal{R}(P)$, we want to find the roots of
$$
Q_P(y)= (b-d\alpha_1^2)y^3+(-d\alpha_1^2+2\alpha_2b-b)y^2+(a\alpha_1^2-2\alpha_2b+\alpha_2^2)y+(a\alpha_1^2-\alpha_2^2).
$$
They are the solutions system
\begin{equation}\label{systemn3}
\left\{\begin{tabular}{lcl}
$y+y^q+y^{q^2}$ & $=$ & $c(d\alpha_1^2-2\alpha_2b+b)$ \\
$y^{q+1}+y^{q^2+1}+y^{q^2+q}$ & $=$ & $c(a\alpha_1^2-2\alpha_2b+\alpha_2^2)$\\
$y^{1+q+q^2}$ & $=$ & $c(-a\alpha_1^2+\alpha_2^2)$\\
\end{tabular}\right.\end{equation}
where $c=(b-d\alpha_1^2)^{-1}$. Notice that $(b-d\alpha_1^2)\not=0$, since $Q_P$ has degree $3$  by Proposition \ref{prop_QP}.\\

Computing the constant terms of (\ref{systemn3}) takes $2$S+$3$M+1$I$ in $\mathbb{F}_q$.
Computing a solution of the system takes at most $3$S+$3$M+$1$I, one square root and two cube roots in $\mathbb{F}_q$, as shown in \cite{EM2}. Finally, computing $u$ from $v$ requires $2$M+$1$I in $\mathbb{F}_{q^3}$.
Summarizing, for $n=3$ point decompression takes at most $2$M+$1$I in $\mathbb{F}_{q^3}$ and $5$S+$6$M+$2$I, one square root and two cube roots in $\mathbb{F}_q$. It is more expensive than that for elliptic curves in short Weierstrass form, which takes at most $1$M in $\mathbb{F}_{q^3}$ and $5$S+$4$M+1$I$, one square root and two cube roots in $\mathbb{F}_q$.\\

We now give an example and some statistics implemented in Magma. We follow the same setup as in Example~\ref{exsem3}, and compare with the method for elliptic curves in short Weierstrass form proposed in \cite{EM2}. 

\begin{example}
Let $q=2^{79}-67$ and $\mu=3$.
We choose random, birationally equivalent curves defined over $\FF_q$:
$$E_{a,d}: 31468753957068040687814x^2+y^2=1+192697821276638966498997x^2y^2$$
and
$$E: y^2 = x^3 + 292467848427659499478503x + 361361026736404004345421.$$
We choose a random point $P'\in E(\FF_{q^3})$ of trace zero, and let $P$ be the corresponding point on $E_{a,d}$.
For brevity, we only write the \textit{x}-coordinates of points of $E$ and the \textit{y}-coordinates of points of $E_{a,d}$:
$$P'= 346560928146076959314753\xi^2 + 456826539628535981034212\xi + 344167470403026652826672,$$
$$P=208520713897518236215966\xi^2 + 451121944550219947368811\xi + 68041089860429901306252.$$
We denote by $\mathcal{R}$ and $\mathcal{R}'$ the representation maps on $E_{a,d}$ and $E$, respectively. We compute:
$$\mathcal{R}'(P') = (\gamma_0,\gamma_1)=(48823870679406912678832, 283451751560764957720302),$$
$$\mathcal{R}(P)= (a_1,b_0,b_1)=(313084342552232820027816, 535814703179324297074161,1).$$
Applying the decompression algorithms to $\mathcal{R}'(P')$ and $\mathcal{R}(P)$, we obtain
$$\mathcal{R'}^{-1}(48823870679406912678832, 283451751560764957720302)=$$
$$\{346560928146076959314753\xi^2 + 456826539628535981034212\xi + 344167470403026652826672,$$
$$164759498614507503187493\xi^2 + 361520690988197751534381\xi + 344167470403026652826672,$$
$$93142483046730124850775\xi^2 + 390578588997895442137449\xi + 344167470403026652826672\},$$
which are the \textit{x}-coordinates of $P'$ and its Frobenius conjugates.
Similarly $$\mathcal{R}^{-1}(313084342552232820027816, 535814703179324297074161,1)=$$
$$\{208520713897518236215966\xi^2 + 451121944550219947368811\xi + 68041089860429901306252,$$
$$539321536961066855011167\xi^2 + 237431391097642968386719\xi + 68041089860429901306252,$$
$$461083568756044083478909\xi^2 + 520372483966766258950512\xi + 68041089860429901306252\},$$
which are the \textit{y}-coordinates of $P$ and its Frobenius conjugates.
\end{example}

We now give an estimate of the average time of compression and decompression for groups of different bit-size.
We consider primes $q_1$, $q_2$, and $q_3$ such that $3|q_i-1$ for all $i$, of bit-length $96$, $112$, and $128$,  respectively. For each $q_i$, we consider five pairs of birationally equivalent curves $(E,E_{a,d})$ defined over $\mathbb{F}_{q_i}$, such that the order of $\mathcal{T}_3$ is prime of bit-length respectively $192$, $224$ and $256$.
On each pair of curves we randomly choose $20'000$ pairs of points $(P',P)$ of trace zero which correspond to each other via the birational isomorphism between the curves. For each pair of points, we compute
$\mathcal{R'}(P'), \mathcal{R}(P), \mathcal{R'}^{-1}(\mathcal{R'}(P')), \mathcal{R}^{-1}(\mathcal{R}(P)).$
For each computation, we consider the average time in milliseconds for each curve, and then the averages over the five curves. The average computation times are reported in the table below.\\
\newline
\textbf{Table 5}.\\
\newline
\begin{tabular}{|l|c|c|c|}
\hline
Bit-length of $|\mathcal{T}_3|$ & $192$ & $224$ & $256$   \\
\hline
Compression on $E$  & $0.015$ & $0.013$ & $0.011$  \\
\hline
Compression on $E_{a,d}$  & $0.034$ & $0.037$ & $0.035$  \\
\hline
Decompression on $E$  & $0.09$ & $0.13$ & $0.15$ \\
\hline
Decompression on $E_{a,d}$ & $0.14$ & $0.19$ & $0.20$\\
\hline
\end{tabular}\\
\newline \\
The next table contains the ratios of the average times for point compression and decompression on elliptic curves in short Weierstrass form and twisted Edwards curves. \\
\newline
\textbf{Table 6}.\\
\newline
\begin{tabular}{|l|c|c|c|}
\hline
Bit-length of $|\mathcal{T}_3|$ & $192$ & $224$ & $256$   \\
\hline
Comp on $E$ / Comp on $E_{a,d}$ & $0.441$ & $0.351$ & $0.314$  \\
\hline
Dec on $E$ / Dec on $E_{a,d}$ & $0.643$ & $0.684$ & $0.750$ \\
\hline
\end{tabular}

\subsection{Explicit equations, complexity, and timings for $n=5$}

In this subsection we give explicit equations and perform computations for $n=5$. We estimate the number of operations needed for the computations and present some timings obtained with Magma. We also make comparisons with the method proposed in~\cite{EM2} for elliptic curves in short Weierstrass form. \\

{\bf Point Compression}. Let $P \in \mathcal{T}_5$. By Theorem~\ref{Th_es_poli}, $q_P$ is of the form
$$q_P(x,y)=(1+y)\hat{q_1}(y)+xq_2(y)=(1+y)(a_1y+a_0)+ x(b_2y^2+b_1y+b_0)$$
where $a_0$, $a_1$, $b_0$, $b_1 \in \mathbb{F}_q$, and $b_2\in \mathbb{F}_2$. 
By (\ref{fH_miller})
$$(1+y)h_1 h_2  q_P = \phi_1\phi_2 \phi_3 (a-dy^2)$$
modulo $E_{a,d}$ and up to a nonzero constant factor.
We consider the generic case, where $b_2=1$ and $\phi_i$ is of the form
$$\phi_i(x,y)=p_i(y+1)+x(y+q_i)$$
with $p_i$, $q_i \in \mathbb{F}_{q^5}$, and $i \in \{1,2,3\}$.
Denote by $k_1$ and $k_2$ the $y$-coordinates of $P_1+ P_2$ and $P_1+ P_2 + P_3$, respectively. We have
$$\mathcal{R}(P)=(a_0,a_1,b_0,b_1,1),$$
where
$$\begin{tabular}{lcl}
$a_1$ & $=$ & $k\cdot(d(p_1p_2p_3)+(p_1+p_2+p_3)),$ \\
$a_0$ & $=$ & $k\cdot(3d(p_1p_2p_3)+(p_1q_2+p_1q_3+q_1p_2+q_1p_3+p_2q_3+q_2p_3)+(p_1+p_2+p_3))+$\\
&& $a_1\cdot(k_1+k_2-2),$\\
$b_1$ & $=$ & $k\cdot(d(p_1p_2q_3+p_1p_3q_2+p_2p_3q_1)+2d(p_1p_2+p_1p_3+p_2p_3)+(q_1+q_2+q_3))+$\\
&&$(k_1+k_2-1),$\\
$b_0$ & $=$ & $k\cdot(2d(p_1p_2q_3+p_1p_3q_2+p_2p_3q_1)+(d-a)(p_1p_2+p_1p_3+p_2p_3)+$\\
&& $(q_1q_2+q_1q_3+q_2q_3))-1) + b_1(k_1+k_2-1)+(k_1+k_2-k_1k_2)$,\\
$k$ & $=$ & $(d(p_1p_2+p_1p_3+p_2p_3)+1)^{-1}$.\\
\end{tabular}$$
Computing $\phi_1$, $\phi_2$, and $\phi_3$ takes $2$S+$34$M+$2$I in $\mathbb{F}_{q^5}$. Computing $a_1$, $a_2$, $b_1$, $b_0$ with the formulas above requires $45$M+$1$I in $\mathbb{F}_{q^5}$. So point compression for $n=5$ takes a total of $2$S+$79$M+$3$I in $\mathbb{F}_{q^5}$. 
The method of ~\cite{EM2} for elliptic curves in short Weierstrass form is less expensive, as it takes $3$S+$18$M+$3$I in $\mathbb{F}_{q^5}$.\\

{\bf Point Decompression}. Let $(\alpha_1,\alpha_2,\alpha_3,\alpha_4,b)\in \mathbb{F}_q^4\times\mathbb{F}_2$ and let $P=(u,v)\in\mathcal{T}_5$ such that $\mathcal{R}(P)=(\alpha_1,\alpha_2,\alpha_3,\alpha_4,b)$. In order to decompress $\mathcal{R}(P)$, we look for the roots of
$$Q_P(y)=Q_5y^5+Q_4y^4+Q_3y^3+Q_2y^2+Q_1y+Q_0,$$
where
$$\begin{tabular}{lcl}
$Q_0$ & $=$ & $a\alpha_1^2-\alpha_3^2$ \\
$Q_1$ & $=$ & $a\alpha_1^2+2a\alpha_1\alpha_2+\alpha_3^2-2\alpha_3\alpha_4$\\
$Q_2$ & $=$ & $-d\alpha_1^2+2a\alpha_1\alpha_2+a\alpha_2^2+2\alpha_3\alpha_4-2\alpha_3b-\alpha_4^2$\\
$Q_3$ & $=$ & $-d\alpha_1^2-2d\alpha_1\alpha_2+a\alpha_2^2+2\alpha_3b+\alpha_4^2-2\alpha_4b$\\
$Q_4$ & $=$ & $-2d\alpha_1\alpha_2-d\alpha_2^2+2\alpha_4b-b$.\\
$Q_5$ & $=$ & $-d\alpha_2^2+b$.
\end{tabular}$$
This amounts to solving the system
$$\left\{\begin{array}{rcr}
e_1(y,y^q,\dots,y^{q^4}) & = & -Q_5^{-1}Q_4\\
e_2(y,y^q,\dots,y^{q^4}) & = & Q_5^{-1}Q_3\\
e_3(y,y^q,\dots,y^{q^4}) & = & -Q_5^{-1}Q_2\\
e_4(y,y^q,\dots,y^{q^4}) & = & Q_5^{-1}Q_1\\
e_5(y,y^q,\dots,y^{q^4}) & = & -Q_5^{-1}Q_0
\end{array}\right.$$
where $e_i(y,y^q,\dots,y^{q^4})$ is the $i$-th elementary symmetric polynomial in $y,y^q,\dots,y^{q^4}$.
Computing the constants in the system takes $4$S+$7$M+$1$I in $\mathbb{F}_q$, while solving the system requires $\OO(\log_2{q})$ operations in $\mathbb{F}_q$ following the approach from~\cite{EM2}. Finally, recovering $u$ from $v$ takes $1$S+$5$M+1$I$ in $\mathbb{F}_{q^5}$. The computational cost of point decompression is comparable to that of the decompression algorithm from~\cite{EM2} for elliptic curves in short Weierstrass form.\\
 
In order to estimate of the average time of compression and decompression for groups of different bit-size,
we consider primes $q_1$, $q_2$, and $q_3$ such that $3|q_i-1$ for all $i$, of bit-length $96$, $112$, and $128$,  respectively. For each $q_i$, we consider five pairs of birationally equivalent curves $(E,E_{a,d})$ defined over $\mathbb{F}_{q_i}$, such that the order of $\mathcal{T}_3$ is prime of bit-length respectively $192$, $224$ and $256$.
On each pair of curves we randomly choose $20'000$ pairs of points $(P',P)$ of trace zero which correspond to each other via the birational isomorphism between the curves. For each pair of points, we compute
$\mathcal{R'}(P'), \mathcal{R}(P), \mathcal{R'}^{-1}(\mathcal{R'}(P')), \mathcal{R}^{-1}(\mathcal{R}(P)).$
For each computation, we consider the average time in milliseconds for each curve, and then the averages over the five curves. The average computation times are reported in the table below.\\
\newline
\textbf{Table 7}.\\
\newline
\begin{tabular}{|l|c|c|c|}
\hline
Bit-length of $|\mathcal{T}_5|$ & $192$ & $224$ & $256$   \\
\hline
Compression on $E$  & $1.566$ & $1.725$ & $1.894$ \\
\hline
Compression on $E_{a,d}$  & $1.704$ & $1.868$ & $2.052$ \\
\hline
Decompression on $E$  & $6.10$ & $31.69$ & $36.99$ \\
\hline
Decompression on $E_{a,d}$ & $6.15$ & $31.37$ & $36.59$\\
\hline
\end{tabular}\\
\newline

The next table contains the ratios of the average times for point compression and decompression on elliptic curves in short Weierstrass form and twisted Edwards curves. \\
\newline
\textbf{Table 8}.\\
\newline
\begin{tabular}{|l|c|c|c|}
\hline
Bit-length of $|\mathcal{T}_5|$ & $192$ & $224$ & $256$ \\
\hline
Comp on $E$ / Comp on $E_{a,d}$ & $0.919$ & $0.923$ & $0.923$  \\
\hline
Dec on $E$ / Dec on $E_{a,d}$ & $0.992$ & $1.010$ & $1.011$ \\
\hline
\end{tabular}\\
\newline\newpage
Finally, Table 9 summarizes the number of operations for point compression and decompression. We compare the operation count from this paper with the one for elliptic curves in short Weierstrass form from~\cite{EM2}.\\
\newline
\textbf{Table 9.}\\
\newline
\begin{tabular}{|l|l|}
\hline
Compression, $n=3$, elliptic & $2$S+$6$M+$1$I in $\mathbb{F}_q$ \\
\hline
Compression, $n=3$, Edwards & $1$M+$1$I in $\mathbb{F}_{q^3}$ and  $2$S+$6$M+$1$I in $\mathbb{F}_q$ \\
\hline
Decompression, $n=3$, elliptic & $1$M in $\mathbb{F}_{q^3}$, $5$S+$4$M+$1$I, one square root, two cube roots in $\mathbb{F}_q$  \\
\hline
Decompression, $n=3$, Edwards &  $2$M + $1$I in $\mathbb{F}_{q^3}$, $5$S+$6$M+$2$I, one square root, two cube roots in $\mathbb{F}_q$\\
\hline
Compression, $n=5$, elliptic & $3$S+$18$M+$3$I in $\mathbb{F}_{q^5}$ \\
\hline
Compression, $n=5$, Edwards & $2$S+$79$M+$3$I in $\mathbb{F}_{q^5}$ \\
\hline
Decompression, $n=5$, elliptic & $\OO(\log_2{q})$ operations in $\mathbb{F}_q$, $1$S+$3$M+$1$I in $\mathbb{F}_{q^5}$ \\
\hline
Decompression, $n=5$, Edwards & $\OO(\log_2{q})$ operations in $\mathbb{F}_q$, $1$S+$5$M+$1$I in $\mathbb{F}_{q^5}$\\
\hline
\end{tabular}\\

\end{document}